\date{}
\newtheorem{theorem}{Theorem}[section]
\newtheorem{corollary}{Corollary}[section]
\newtheorem{lemma}{Lemma}[section]
\newtheorem{definition}{Definition}[section]
\newtheorem{example}{Example}[section]
\newtheorem{question}{Question}[section]
\begin{document}
\title[]{Nevanlinna theory for Jackson difference operators and entire solutions of $q$-difference equations}

\author[Tingbin Cao]{Tingbin Cao}
\address[Tingbin Cao]{Department of Mathematics, Nanchang University, Nanchang, Jiangxi 330031, P. R. China}
\email{tbcao@ncu.edu.cn}
\thanks{The first author is supported by the National Natural Science Foundation of China
(No. 11871260, No. 11461042) and the outstanding young talent assistance program of Jiangxi Province (No. 20171BCB23002) in China.}

\author[Huixin Dai]{Huixin Dai}
\address[Huixin Dai]{Department of Mathematics, Nanchang University, Nanchang, Jiangxi 330031, P. R. China}
\curraddr{Department of Mathematics, Beijing University of Posts and Telecommunications, Beijing 100876, P. R. China}
\email{dhxdai@163.com}

\author[Jun Wang]{Jun Wang}
\address[Jun Wang]{School of Mathematic Sciences, Fudan University, Shanghai 200433, P. R. China}
\email{majwang@fudan.edu.cn}
\thanks{The third author is supported by the National Natural Science Foundation of China (No. 11771090).}

\subjclass[2010]{Primary 30D35; Secondary 39A70, 33D99, 39A13}
\keywords{Jackson difference operators; Nevanlinna theory; entire functions; $q$-difference equations; $q$-special functions}

\begin{abstract} This paper establishes a version of Nevanlinna theory based on Jackson difference operator $D_{q}f(z)=\frac{f(qz)-f(z)}{qz-z}$ for meromorphic functions of zero order in the complex plane $\mathbb{C}$. We give the logarithmic difference lemma, the second fundamental theorem, the defect relation, Picard theorem and five-value theorem in sense of Jackson $q$-difference operator. By using this theory, we investigate the growth of entire solutions of linear Jackson $q$-difference equations $D^{k}_{q}f(z)+A(z)f(z)=0$ with meromorphic coefficient $A,$ where $D^k_q$ is Jackson $k$-th order difference operator, and estimate the logarithmic order of some $q$-special functions.
\end{abstract}
\maketitle
\tableofcontents

\section{Introduction}
The study on $q$-functions and  $q$-difference equations appeared already at the beginning of the last century, see works by Jackson \cite{Jackson, Jackson-2}, Carmichael \cite{carmichael}, Mason \cite{Mason}, Trjitzinskey \cite{trjitzinskey} and other known authors such as Euler, Poincare, Picard, Ramanunjan. Birkhoff and Guenther \cite{birkhoff-guenther} once announced a program which they did not develop further, and $q$-difference equations remained less advanced than differential equations and difference equations. Since years eighties \cite{hahn-2}, an intensive and somewhat surprising interest in the subject reappeared in mathematics and its applications. Mathematicians have reconsidered $q$-difference equations for their links with other branches of mathematics such as quantum algebras and $q$-combinatorics, and Birkhoff and Gunther's program has been continued. For examples, B\'{e}zivin and Ramis' results on divergent seires have appplications to rationality criteria for solutions of systems of $q$-difference equations \cite{bezivin-boutabaa} and for systems of $q$-difference and differential equations \cite{ramis};  L. D. Vizio \cite{vizio} studied the $q$-analogue of Grothendieck-Katz's conjecutre on $p$-curvatures on the arithmetic theory of $q$-difference equations; Z. G. Liu \cite{liuzhiguo} investigated the $q$-partial differential equations and $q$-series; J. Cao \cite{caojian} considered homogeneous $q$-partial difference equations.\par

Let $s\in \mathbb{Z}$ and $0<|q|<1.$ The subjacent theory was founded on the corresponding divided difference derivative \cite{magnus, magnusap, marco-parcet} as follows
\begin{eqnarray}\mathcal{D}f(x(s))=\frac{f(x(s+\frac{1}{2}))-f(x(s-\frac{1}{2}))}{x(s+\frac{1}{2})-x(s-\frac{1}{2})}.\end{eqnarray}
The basic property of this derivative is that it sends a polynomial of degree $n$ to a polynomial of degree $n-1.$ \par

(I). If the lattice $x(s)$ is a constant, then the corresponding divided derivative gives just the classical derivative $D(f(x))=\frac{d}{dx}f(x).$\vskip 2mm

(II). If $x(s)$ is the special lattice of $x(s)=s,$ then the divided derivative gives the classical difference
\begin{eqnarray*}\Delta_{\frac{1}{2}}f(x)=\Delta f(t)=f(t+1)-f(t),\,\,\, t=x-\frac{1}{2}.\end{eqnarray*}\par

(III). If $x(s)=q^s,$ the divided derivative  yields the so-called Jackson difference operator (or called Jackson $q$-derivative)\cite{Jackson1908, Jackson, Jackson-2}
\begin{eqnarray*}D_{q^{\frac{1}{2}}}=D_{q}f(t)=\frac{f(qt)-f(t)}{qt-t}, \,\,\, t=q^{-\frac{1}{2}}x.\end{eqnarray*}

(IV). If $x(s)=\frac{q^{s}+q^{-s}}{2},$ then the derivative is the so-called Askey-Wilson divided difference operator \cite{R.Askey} that can be written as
\begin{eqnarray*}D_{AW}f(x(z))=\frac{f(x(q^{\frac{1}{2}}z))-f(x(q^{-\frac{1}{2}}z))}{x(q^{\frac{1}{2}}z)-x(q^{-\frac{1}{2}}z)},\quad z=q^{s}.
\end{eqnarray*}
What's more, Wilson also proposed the concept of the $Wilson$ difference operator
to study Wilson polynomials $W_n(x;a,b,c,d)$, see \cite{R.Askey}.
\vskip 2mm
\par
It is well-known that the Nevanlinna theory \cite{hayman} based on the classical derivative operator was established by R. Nevanlinna in the 1920s. It has been played the key role in studying oscillation of complex differential equations \cite{laine}. Recently, the Nevanlinna theory on some divided difference derivatives was investigated. For classical difference operator $\Delta f(x)=f(x+c)-f(x)$,  its Nevanlinna theory was firstly discussed by Halburd-Korhonen \cite{22, 23} and Chiang-Feng \cite{Feng, chiang-feng-2009} independently. Chiang and Feng \cite{Wilson2} considered the Nevanlinna theory for the Askey-Wilson difference operator, and that for Wilson difference operator was studied by Cheng and Chiang \cite{Wilson}.
Meanwhile, Nevanlinna theory for these difference operators have been positively applying to study complex difference equations. Now it remains to be seen its own version for the Jackson difference operator appeared in (III)  so that being applied to study complex Jackson $q$-difference equations.
\vskip 2mm
\par

Recall that the Jackson difference operator
\begin{eqnarray}D_{q}f(z)=\frac{f(qz)-f(z)}{qz-z},\,\,\, z\in\mathbb{C}, \,\, 0<|q|<1,\end{eqnarray}  was initially investigated by Jackson \cite{Jackson1908, Jackson, Jackson-2} in 1908. Very recently, the authors learn from Professor Zhiguo Liu \footnote{On the Chinese-Finnish Workshop in Complex Analysis 2019 at Suzhou University of Science and Technology} that in fact, this concept can be traced back to L. Schendel \cite{schendel} in 1877. Clearly, if $f$ is differentiable, $\lim_{q\to 1}D_{q}f(z)=\frac{d}{dz}f(z)$. Observing that $D_{q}z^{k}=\frac{q^{k}-1}{q-1}z^{k-1},$  we have $$D_{q}\Big(\sum_{j=0}^na_jz^j\Big)=\sum_{j=0}^{n-1}a_{j+1}\frac{q^{j+1}-1}{q-1}z^{j}.$$ Furthermore, the Jackson difference operator has the derivative rules of product, ratio, chain rule, inverse function and Leibniz formula similar to that of the classical derivative $\frac{d}{dz}$, which we will show in Lemma \ref{L4.1} later.
\vskip 2mm
\par To discuss the solutions $f(z)=\sum_{n=0}^{\infty}c_{n}z^{n}$ of $q$-difference equations, we ecall the following notations (refer to \cite{bangerezako}), for $a\in\mathbb{C},n\in\mathbb{N}$,
$$(a; q)_{0}=1,\quad (a; q)_{n}=(1-a)(1-aq)(1-aq^{2})\cdots(1-aq^{n-1}).$$
If $0<|q|<1,$ then $(a; q)_{\infty}:=\prod_{n=0}^{+\infty}(1-aq^{n}).$ Define
$$(a_{1}, \ldots, a_{p}; q)_{n}:=(a_{1}; q)_{n}\cdots(a_{p}; q)_{n},\quad \left[\begin{array}{c}
                                                                                             n \\
                                                                                            j
                                                                                          \end{array}
\right]_{q}=\frac{(q; q)_{n}}{(q; q)_{j}(q; q)_{n-j}}.$$
It is of particular interest considering the case that $c_{n+1}/c_{n}$ is a rational function in $q^{n}$. For example
\begin{eqnarray*}\frac{c_{n+1}}{c_{n}}=\frac{\prod_{j=1}^{r}(\alpha_{j}-q^{-n})}{\prod_{j=1}^{s}(\beta_{j}-q^{-n})(q-q^{-n})},
\end{eqnarray*} such series seems to have the form
\begin{eqnarray}
&&_r\phi_{s}\left(\left.\begin{array}{cccc}
                 \alpha_{1} & \alpha_{2} & \ldots, & \alpha_{r} \\\nonumber
                 \beta_{1} & \beta_{2} & \ldots, & \beta_{s}
               \end{array}\right| q; z
\right)\\
&=&\sum_{j=0}^{\infty}\frac{(\alpha_{1}, q)_{j}(\alpha_{2}; q)_{j}\cdots(\alpha_{r}; q)_{j}}{(\beta_{1}, q)_{j}(\beta_{2}; q)_{j}\cdots(\beta_{s}; q)_{j}}
\left[(-1)^{j}q^{\frac{j(j-1)}{2}}\right]^{1+s-r}\frac{z^{j}}{(q; q)_{j}}.\end{eqnarray} These series are referred to as the $q$-(basic) hypergeometric series \cite{gasper-rahman}. It is known \cite[Pages 17-18]{bangerezako} that every nonzero solution of the first order Jackson $q$-difference equation $D_{q}f(z)=f(z)\,(0<|q|<1)$ reads
$$f(z)=_1\phi_{0}(0;-;q,(1-q)z)=\sum_{n=0}^{\infty}\left(\prod_{j=1}^{n}\frac{1-q}{1-q^{j}}\right)z^{n}=\sum_{n=0}^{\infty}\frac{(1-q)^{n}z^{n}}{(q; q)_{n}}$$
which is also named by $\exp_{q}(z)$ (sometimes, we also use $e_{q}^{z}$), the $q$-version exponential function (see Example \ref{Ex-1}). The nonzero solution of Jackson $q$-difference equation of the first order form $$D_{q}f(z)=af(z), \,\,\, a\in\mathbb{C}\setminus\{0\}, \,\,\, 0<|q|<1$$ is $c_{0}\exp_{q}(az)$ where $c_{0}$ is the constant term of the expand series of $f$ at origin. For $k\in\mathbb{N}\cup\{0\}$, we denote by Jackson $k$th-order difference operator \begin{eqnarray}D_q^0f(z):=f(z),\quad D_{q}^{k}f(z):= D_{q}(D_{q}^{k-1}f(z)).
\end{eqnarray} For more background of Jackson difference operators and $q$-difference equations \begin{eqnarray*}F(z, f(z), D_{q}f(z), \ldots, D^{k}_{q}f(z))=0,\end{eqnarray*} we refer to see the book \cite{bangerezako}. \vskip 2mm
\par These rich background and recent works on Nevanlinna theory \cite{Wilson, Wilson2, chiang-feng-2016} motivate us to study the Nevanlinna theory and $q$-difference equations for Jackson difference operators. To do that, we will apply the corresponding results for the classical differential operator \cite{hayman} and the $q$-difference operator $\nabla_{q}f(z)=f(qz)-f(z)$ \cite{Barnett}.
This paper is organised as follows.\vskip 2mm
\par Section 2 first gives some basic notions and results in classical Nevanlinna theory, then shows the logarithmic derivative lemma, the second fundamental theorem, defect relation, Picard theorem  and five-value theorem for Jackson difference operator
(Theorems 2.1-2.6).
In Section 3, we consider the Jackson Kernel $Ker(D_{q}),$ and show an interesting phenomenon (Theorem \ref{T6}) that the Jackson $q$-Casorati determinant $C_{J}(f_{1}, f_{2})$ does not belong to $Ker(D_q)$, where $f_1,f_2$ are two linearly independent analytic solutions at the origin of the linear Jackson $q$-difference equation $D_{q}^{2}f(z)+A(z)f(z)=0$. This is very different from the case of derivative operator in the differential equations \cite{laine}. Section 4 mainly investigates the growth of entire solutions of linear Jackson $q$-difference equation $D_{q}^{k}f(z)+A(z)f(z)=0$. Several examples are given to explain that Theorem \ref{T5} can help us to know the exact logarithmic order of some known $q$-special functions.\par

\section{Nevanlinna theory for Jackson difference operator}

Before establishing Nevanlinna theory for Jackson difference operators, for convenience of readers, we briefly introduce the basic notation and results of classical Nevanlinna theory for derivative operator $\frac{d}{dz}$.
\subsection{Preliminaries of classical Nevanlinna theory}
 Let $f(z)$ be a nonconstant meromorphic function on $\mathbb{C}$. For $r>0$, we denote $\log^{+}r=\max\{\log r,0\}$. The Nevanlinna characteristic of $f$ is defined to be the real-valued function
\begin{equation}\label{2.1}
T(r,f):=m(r,f)+N(r,f),
\end{equation}
where $m(r,f)$ and $N(r,f)$ are called the proximity function and counting function respectively, and $m(r,f)=\int_0^{2\pi}\log^+|f(re^{i\theta})|d\theta,$
\begin{equation}\label{2.2}
N(r,f)=\int^{r}_{0}\frac{n(t,f)-n(0, f)}{t}dt+n(0, f)\log r.
\end{equation}
Here, $n(t, f)$ denotes the number of poles of $f$ in $\{|z|<t\}$ counting multiplicities.
 The characteristic function $T(r,f)$ is an increasing convex function of $\log r$, which plays the role of $\log M(r,f)$ for an entire function. The order of $f$ is defined by
\begin{equation}\sigma(f)=\limsup_{r\to\infty}\frac{\log T(r,f)}{\log r}.\end{equation}
\par The \emph{first fundamental theorem}  states that for any complex number $a\in\mathbb{C}\cup\{\infty\}$
\begin{equation}\label{2.3}
T(r,\frac{1}{f-a})=T(r,f)+O(1)
\end{equation}
as $r$$\rightarrow+\infty,$ which comes from the Jensen formula
\begin{equation}\label{2.3}
N(r, \frac{1}{f})-N(r,f)=\frac{1}{2\pi}\int_{0}^{2\pi}\log |f(re^{i\theta})|d\theta.
\end{equation}Denote by $S(r, f)$ the quantity of $S(r,f)=o(T(r,f))$ possibly outside a exceptional set in $r$ of finite linear measure   and by $\overline{N}(r, f)$ the counting function defined by $\overline{n}(t, f)$ the number of poles of $f$ ignoring multiplicities. In 1925, R. Nevanlinna established \emph{the second fundamental theorem} that for any $p$ distinct values $a_1,\cdots,a_p\in \mathbb{C}\cup\{\infty\}$,
\begin{eqnarray*}(p-2)T(r, f)&<&\sum_{j=1}^{p}N(r, \frac{1}{f-a_{j}})-N_{1}(r)+S(r, f)\\
&\leq&\sum_{j=1}^{p}\overline{N}(r, \frac{1}{f-a_{j}})+S(r, f),\end{eqnarray*} where $N_{1}(r)=2N(r, f)-N(r, f')+N(r, 1/f')$. It was proved by the logarithmic derivative lemma that $m(r, f^{'}/f)=S(r,f)$, which is also useful in the study on complex difference equations \cite{laine}. We refer the readers to see the well-known book due to Hayman \cite{hayman} for the details of classical Nevanlinna theory.

\subsection{Jackson difference analogue of logarithmic derivative lemma} Without loss of generality, set $0<|q|<1.$ We now consider Jackson difference operator
\begin{equation}
D_{q}f(z)=\frac{f(qz)-f(z)}{qz-z}.
\end{equation}
Based on the $q$-analogue of logarithmic derivative lemma \cite{Barnett}, we obtain the the logarithmic derivative lemma for Jackson difference operators as follows.\par

\begin{theorem} \label{T1}
Let $f$ be a nonconstant  meromorphic function with zero order. Then
\begin{equation}
m\Big(r,\frac{D_{q}^{k}f(z)}{f(z)}\Big)=o(T(r,f))
\end{equation}
on a set of logarithmic density 1.
\end{theorem}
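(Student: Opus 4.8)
The plan is to reduce the statement to the $q$-difference logarithmic derivative lemma of Barnett et al. \cite{Barnett}, which asserts that for a meromorphic function $f$ of zero order and $0<|q|<1$ one has $m\big(r,f(qz)/f(z)\big)=o(T(r,f))$ on a set of logarithmic density $1$. The bridge is the elementary identity $D_qf(z)=\frac{f(qz)-f(z)}{(q-1)z}$, which yields
$$\frac{D_qf(z)}{f(z)}=\frac{1}{(q-1)z}\Big(\frac{f(qz)}{f(z)}-1\Big).$$
First I would settle the base case $k=1$. Taking proximity functions and using subadditivity,
$$m\Big(r,\frac{D_qf}{f}\Big)\le m\Big(r,\frac{1}{(q-1)z}\Big)+m\Big(r,\frac{f(qz)}{f(z)}\Big)+O(1).$$
For large $r$ the factor $1/((q-1)z)$ has modulus less than $1$ on $|z|=r$, so its proximity contribution is $O(1)$, while the middle term is $o(T(r,f))$ by \cite{Barnett}; this gives the case $k=1$ at once.

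To iterate, I need each $D_q^{j}f$ to remain of zero order, so that \cite{Barnett} can be reapplied to it. For this I would record a growth comparison. From the $k=1$ proximity bound, $m(r,D_qf)\le m(r,D_qf/f)+m(r,f)=m(r,f)+o(T(r,f))$. The poles of $D_qf$ occur only among the poles of $f(z)$, the poles of $f(qz)$, and $z=0$; since $n(t,f(qz))=n(|q|t,f)$ gives $N(r,f(qz))=N(|q|r,f)+O(\log r)\le N(r,f)+O(\log r)$ by monotonicity (as $|q|<1$), we obtain $N(r,D_qf)\le 2N(r,f)+O(\log r)$. Hence $T(r,D_qf)\le 2T(r,f)+o(T(r,f))$, and by iteration $T(r,D_q^{j}f)=O(T(r,f))$ for each fixed $j$. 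In particular $\sigma(D_q^{j}f)\le\sigma(f)=0$, so every iterate is again of zero order; the degenerate situation where some $D_q^{j}f$ is constant is trivial, since then $D_q^{k}f\equiv 0$ and $m(r,D_q^kf/f)=0$.

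For the inductive step I would factor
$$\frac{D_q^{k}f}{f}=\frac{D_q\big(D_q^{k-1}f\big)}{D_q^{k-1}f}\cdot\frac{D_q^{k-1}f}{f},$$
so that
$$m\Big(r,\frac{D_q^{k}f}{f}\Big)\le m\Big(r,\frac{D_q(D_q^{k-1}f)}{D_q^{k-1}f}\Big)+m\Big(r,\frac{D_q^{k-1}f}{f}\Big).$$
The first term is $o(T(r,D_q^{k-1}f))=o(T(r,f))$ by the case $k=1$ applied to the zero-order function $g=D_q^{k-1}f$, combined with the growth estimate $T(r,D_q^{k-1}f)=O(T(r,f))$; the second term is $o(T(r,f))$ by the induction hypothesis. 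Adding them gives the claim.

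The main point requiring care is the bookkeeping of exceptional sets. Each application of \cite{Barnett} discards a set of logarithmic density $0$, and the induction invokes the lemma for the finitely many auxiliary functions $D_q^{j}f$ ($0\le j\le k-1$); since a finite union of sets of logarithmic density $0$ again has logarithmic density $0$, the final estimate holds on a set of logarithmic density $1$. The only genuinely substantive input beyond this bookkeeping is the growth comparison $T(r,D_q^{j}f)=O(T(r,f))$, which is precisely what allows each error term $o(T(r,D_q^{j}f))$ to be absorbed into $o(T(r,f))$; I expect establishing this comparison cleanly (and verifying that the iterates stay of zero order) to be the crux of the argument.
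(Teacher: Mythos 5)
Your induction through the iterates $D_q^{j}f$ is a genuinely different route from the paper's proof: the paper never applies the logarithmic difference lemma to the iterates at all, but instead uses the exact expansion $D^{k}_{q}f(z)=(q-1)^{-k}z^{-k}q^{-k(k-1)/2}\sum_{j=0}^{k}(-1)^{j}\left[\begin{smallmatrix}k\\ j\end{smallmatrix}\right]_{q}q^{j(j-1)/2}f(q^{k-j}z)$, so that $m\bigl(r,D_q^{k}f/f\bigr)$ is bounded by $\sum_{j}m\bigl(r,f(q^{k-j}z)/f(z)\bigr)+O(1)$, and each of these terms is handled by telescoping the ratios $f(q^{n+1}z)/f(q^{n}z)$ and applying the lemma of Barnett et al.\ to the rescalings $f(q^{n}z)$, whose zero order is immediate from $T(r,f(q^{n}z))=T(|q|^{n}r,f)+O(1)$. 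Your route can be made to work, but as written it has one load-bearing gap.

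The gap is the step $\sigma(D_q^{j}f)\le\sigma(f)=0$. You derive it from $m(r,D_qf)\le m(r,D_qf/f)+m(r,f)=m(r,f)+o(T(r,f))$, but this estimate, coming from your $k=1$ case, holds only on a set of logarithmic density $1$, while the order is a $\limsup$ over \emph{all} $r$. The complement, though of logarithmic density $0$, may be unbounded, and nothing in your argument controls $T(r,D_qf)$ there; so the zero-order property of $g=D_q^{k-1}f$ --- precisely the hypothesis needed to reapply the Barnett et al.\ lemma --- is not established as written. The repair is simple and removes exceptional sets from this part entirely: for $0<|q|<1$ one has $m(r,f(qz))=m(|q|r,f)\le T(|q|r,f)\le T(r,f)$ by rotation invariance and monotonicity of $T$, and (as you already note for poles) $N(r,f(qz))=N(|q|r,f)+O(1)\le N(r,f)+O(1)$; hence from $D_qf(z)=\frac{f(qz)-f(z)}{(q-1)z}$ we get $T(r,D_qf)\le 2T(r,f)+\log r+O(1)$ for \emph{every} $r$. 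Since $\log r=O(T(r,f))$ for nonconstant $f$, iteration gives $T(r,D_q^{j}f)=O(T(r,f))$ for all $r$, and in particular $\sigma(D_q^{j}f)=0$. With this unconditional comparison in place of your conditional one, your inductive step, the absorption of the error terms $o(T(r,D_q^{j}f))$ into $o(T(r,f))$, and the finite-union bookkeeping of exceptional sets are all correct.
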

\begin{proof} By \cite[Theorem 1.1]{Barnett} and \cite[Theorem 1.1]{zhang-korhonen}, if $f$ is a nonconstant meromorphic function with zero order,and $q\in\mathbb{C}\setminus\{0\}$, then
\begin{equation}\label{E1.3}
m\Big(r,\frac{f(qz)}{f(z)}\Big)=o(T(r,f)),\quad T(r, f(qz))=T(r, f)+o(T(r,f))
\end{equation}
hold for all $r$ on a set of logarithmic density $1.$ Thus for every $0\leq j\leq k$, we have\begin{equation}
m\Big(r,\frac{f(q^{k-j}z)}{f(z)}\Big)\leq \sum_{n=0}^{k-j-1}m\Big(r,\frac{f(q^{n+1}z)}{f(q^{n}z)}\Big)=o(T(r,f)),
\end{equation} for all $r$ on a set of logarithmic density $1.$ Hence,  \begin{eqnarray*}
m\Big(r,\frac{D_{q}(f)}{f(z)}\Big)\leq m\Big(r,\frac{f(qz)}{f(z)}\Big)+m\Big(r,\frac{1}{(q-1)z}\Big)+O(1)=o(T(r,f))
\end{eqnarray*}holds for all $r$ on a set of logarithmic density $1.$ For general positive integer $k>2$, it follows from the equality \cite[page 13]{bangerezako} \begin{eqnarray*}D^{k}_{q}f(z)=(q-1)^{-k}z^{-k}q^{-k(k-1)/2}\sum_{j=0}^{k}(-1)^{j}\left[\begin{array}{c}
                                                                                            k \\
                                                                                            j
                                                                                          \end{array}
\right]_{q}q^{j(j-1)/2}f(q^{k-j}z),\end{eqnarray*} again by (13), we have
\begin{eqnarray*}m(r, \frac{D^{k}_{q}f(z)}{f(z)})&\leq& \sum_{j=0}^{k}m\Big(r, \frac{f({q}^{k-j}z)}{f(z)}\Big)+O(1)=o(T(r, f))
\end{eqnarray*}for all $r$ on a set of logarithmic density $1.$ This completes the proof.\end{proof}

\subsection{Second fundamental theorem for Jackson difference operator} For $a\in\mathbb{C}$,
$\overline{n}(r, \frac{1}{f-a})$ can be written as a sum of integers $``h-k"$ summing over all the zeros of $f(z)-a$ in $\{z: |z|<r\}$  with multiplicity $``h",$ and where $``k(=h-1)"$ is the multiplicity of $f'(z)=0$ where $f(z)=a$. Similarly, $\overline{n}(r, f)=\overline{n}(r, \frac{1}{f}=0)$ can be written as a sum of integers $``h-k"$ summing over all the poles of $f$ in $\{z: |z|<r\}$ with multiplicity $``h",$ and where $``k(=h-1)"$ is the multiplicity of $d\big(\frac{1}{f(z)}\big)/dz=-f'(z)/f^{2}(z)=0$ where $f(z)=\infty.$\vskip 2mm
\par
We define a Jackson analogue of the $\bar{n}(r, \frac{1}{f-a})$ and $\bar{n}(r, f),$ similarly as in \cite{Wilson2, Wilson}. Denote
$$\tilde{n}_{J}(r,\frac{1}{f-a})=\tilde{n}_{J}(r, f=a)$$ to be the sum of the form $``h-k"$ summing over all the points $z$ in ${|z|<r}$ at which $f(z)=a$ with multiplicity ``$h$", while the $``k"$ is defined by $k:=\min\{h,k'\}$, $k'$ is the multiplicity of $D_{q}f(z)=0$ at $z.$ Recall that the Jackson difference operator sends a polynomial of degree $n$ to a polynomial of degree $n-1,$ then $\tilde{n}_{J}(r,p=a)=1$ holds for any nonconstant polynomial function $p(z).$ Thus it is given in a natural way as in classical Nevanlinna theory. And $$\tilde{n}_{J}(r, f)=\tilde{n}_{J}(r, f=\infty)=\tilde{n}_{J}(r, \frac{1}{f}=0)$$ can be written as a sum of integers $``h-k"$ summing over all the points $z$ in $\{z: |z|<r\}$ at which $f(z)=\infty$ with multiplicity $``h",$ while $k:=\min\{h,k'\}$ and $k'$ is the multiplicity of $D_{q}\big(\frac{1}{f(z)}\big)=-\frac{D^{q}f(z)}{f(z)f(qz)}=0$ where $f(z)=\infty.$ Then for any $a\in\mathbb{C}\cup\{\infty\}$, we define the Jackson-type counting functions as $$\tilde{N}_{J}(r,f=a)=\int_{0}^{r}\frac{\tilde{n}_{J}(t,f=a)-\tilde{n}_{J}(0,f=a)}{t}dt+\tilde{n}_{J}(0,f=a)\log r.$$
Since the truncated counting function $\tilde{N}_{q}(r, \frac{1}{f-a})$ defined in \cite{Barnett} is possible negative for all $r$, the Jackson-type counting function $\tilde{N}_{J}(r,f=a)$ is better than $\tilde{N}_{q}(r, \frac{1}{f-a}).$ \vskip 2mm
\par
Next, we will deduce the second fundamental theorem in terms of Jackson-type counting function, which is based on the second fundamental theorem due to Barnett-Halburd-Korhonen-Morgan \cite{Barnett}. Of course, this can also proved directly in terms of the logarithmic difference lemma for Jackson difference operator (Theorem \ref{T1}), similarly as in \cite{hayman, Barnett, Wilson, Wilson2}.\par

\begin{theorem}\label{T2}
Let $f$ be a nonconstant meromorphic function of zero order, let $0<|q|<1,$ and let $a_{1}, \ldots, a_{p}$ $(p\geq 2)$ be distinct points in $\mathbb{C}\cup\{\infty\}.$ Then \begin{eqnarray}\label{E3.5}
(p-2)T(r,f)&\leq& \sum_{j=1}^{p}N(r, f=a_{j})-N_{J}(r)-\log r+o(T(r,f))\\\nonumber
&\leq& \sum_{j=1}^{p}\tilde{N}_{J}(r, f=a_{j})+o(T(r,f))\end{eqnarray}
holds for all $r$ on a set of logarithmic density one,  where $$N_{J}(r)=2N(r, f)-N(r, D_{q}(f))+N(r, \frac{1}{D_{q}f}).$$\end{theorem}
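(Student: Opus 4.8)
The plan is to transplant the classical proof of the second fundamental theorem, with the ordinary derivative $f'$ replaced throughout by the $q$-shift $\nabla_qf(z)=f(qz)-f(z)$ (equivalently, eventually, by $D_qf$) and the classical logarithmic derivative lemma replaced by its Jackson analogue, Theorem \ref{T1}; this is exactly the route that turns the $q$-difference second main theorem of Barnett--Halburd--Korhonen--Morgan \cite{Barnett} into the present statement. The bridge between the two operators is the identity $f(qz)-f(z)=(q-1)z\,D_qf(z)$, i.e. $\nabla_qf=(q-1)z\,D_qf$, and it is precisely the entire weight $(q-1)z$ that will account for the extra $-\log r$ term.

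First I would prove the left-hand inequality directly. Set $F(z)=\sum_{j=1}^{p}\frac1{f(z)-a_j}$, reducing to finite $a_j$ by a preliminary normalization if some $a_j=\infty$. A purely elementary estimate near the points where $f$ is close to a given $a_\nu$ yields $\sum_{j}m\bigl(r,\tfrac1{f-a_j}\bigr)\le m(r,F)+O(1)$. Writing $F=(F\,\nabla_qf)\cdot\frac1{\nabla_qf}$ and using $\nabla_q(f-a_j)=\nabla_qf$, the shift estimate in \eqref{E1.3} gives $m(r,F\,\nabla_qf)\le\sum_j m\bigl(r,\tfrac{(f-a_j)(qz)}{(f-a_j)(z)}\bigr)+O(1)=o(T(r,f))$ on a set of logarithmic density one, so that $m(r,F)\le m\bigl(r,\tfrac1{\nabla_qf}\bigr)+o(T(r,f))$. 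Converting proximity to characteristic functions by the first fundamental theorem, writing $m\bigl(r,\tfrac1{\nabla_qf}\bigr)=T(r,\nabla_qf)-N\bigl(r,\tfrac1{\nabla_qf}\bigr)+O(1)$, $T(r,\nabla_qf)=m(r,\nabla_qf)+N(r,\nabla_qf)$, and $m(r,\nabla_qf)\le m(r,f)+o(T(r,f))$, and finally rearranging with $m(r,f)=T(r,f)-N(r,f)$ and absorbing the value $\infty$ into the sum, I expect to obtain $(p-2)T(r,f)\le\sum_{j=1}^{p}N(r,f=a_j)-N_\nabla(r)+o(T(r,f))$, where $N_\nabla(r)=2N(r,f)-N(r,\nabla_qf)+N\bigl(r,\tfrac1{\nabla_qf}\bigr)$. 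The last step is to rewrite $N_\nabla$ in terms of $D_q$: since $(q-1)z$ is entire it leaves the poles essentially unchanged, $N(r,\nabla_qf)=N(r,D_qf)+O(1)$, but it inserts a simple zero at the origin, so $N\bigl(r,\tfrac1{\nabla_qf}\bigr)=N\bigl(r,\tfrac1{D_qf}\bigr)+\log r$; hence $N_\nabla(r)=N_J(r)+\log r$, and the announced $-\log r$ drops out.

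For the right-hand inequality I would compare $N(r,f=a_j)$ with its truncation $\tilde N_J(r,f=a_j)$. By definition the difference $N(r,f=a_j)-\tilde N_J(r,f=a_j)$ is the counting function of $\min\{h,k'\}$ over the points where $f=a_j$ with multiplicity $h$ and $D_qf$ vanishes to order $k'$. Because the $a_j$ are distinct these point sets are disjoint in $j$, and at each point $\min\{h,k'\}\le k'$; summing over the finite values therefore gives $\sum_j\bigl(N-\tilde N_J\bigr)(r,f=a_j)\le N\bigl(r,\tfrac1{D_qf}\bigr)$, with the value $\infty$ handled in the same way through $D_q(1/f)=-\frac{D_qf}{f\,f(qz)}$. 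It then suffices to check $N\bigl(r,\tfrac1{D_qf}\bigr)\le N_J(r)+\log r+o(T(r,f))$, i.e. $N(r,D_qf)\le 2N(r,f)+\log r+o(T(r,f))$; this holds because the poles of $D_qf=\frac{f(qz)-f(z)}{(q-1)z}$ come only from the poles of $f(z)$, of $f(qz)$, and a possible simple pole at the origin, together with the zero-order shift invariance $N(r,f(qz))=N(r,f)+o(T(r,f))$ implied by \eqref{E1.3}.

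The hard part will be the origin bookkeeping attached to the weight $\frac1{(q-1)z}$. Unlike $f'$, the operator $D_qf$ differs from $\nabla_qf$ by a genuinely meromorphic factor, so the order of a zero or pole at $z=0$ shifts by one and produces the $\pm\log r$ terms that distinguish this theorem from its classical model; I would need to verify that $N_\nabla(r)=N_J(r)+\log r$ holds with coefficient exactly $1$ in every case (the $\log r$ coming from the zero count when $f$ is analytic at the origin, and from the pole count when $f$ has a pole there). A second delicate point, structurally new compared with the differential setting, is that $D_qf$ does \emph{not} in general vanish at a multiple point of $f-a_j$, since there $f(qz)\ne f(z)$; hence the truncation $k=\min\{h,k'\}$ cannot be read off from a naive ``multiplicity drops by one'' rule as for $f'$, and the estimate must instead be routed through $N\bigl(r,\tfrac1{D_qf}\bigr)$ as above. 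Finally, since all the logarithmic-derivative and shift estimates hold only outside exceptional sets, I would confirm at the end that the finitely many invocations of Theorem \ref{T1} and of \eqref{E1.3} can be arranged to hold simultaneously on a single set of logarithmic density one.
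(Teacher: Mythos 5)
Your overall route is the same as the paper's: obtain the Barnett--Halburd--Korhonen--Morgan second main theorem for $\nabla_qf(z)=f(qz)-f(z)$ (the paper cites \cite[Theorem 3.1]{Barnett} as a black box, while you re-derive it from the logarithmic difference lemma --- an alternative the paper explicitly acknowledges), convert it to the $D_q$ form through $\nabla_qf=(q-1)z\,D_qf$, and then compare $N(r,f=a_j)$ with $\tilde N_J(r,f=a_j)$. For the conversion step the paper uses Jensen's formula, $N(r,1/D_qf)-N(r,D_qf)=N(r,1/\nabla_qf)-N(r,\nabla_qf)-\log r+O(1)$, which handles all configurations at the origin uniformly; your direct bookkeeping with the case analysis you flag also works and gives the same $N_{\nabla_q}(r)=N_J(r)+\log r+O(1)$. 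Up to and including the left-hand inequality of \eqref{E3.5}, your argument is sound.

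The genuine gap is in the right-hand inequality, at the step ``it then suffices to check $N(r,\tfrac1{D_qf})\le N_J(r)+\log r+o(T(r,f))$.'' That reduction is valid only when every $a_j$ is finite, because only then is the total deficit $\sum_j\bigl(N-\tilde N_J\bigr)(r,f=a_j)$ dominated by $N(r,\tfrac1{D_qf})$. When $\infty$ is one of the $a_j$ --- exactly the case needed for the Picard and five-value applications --- the deficit at a pole $z_0$ of $f$ is governed by the zero order of $D_q(1/f)=-D_qf/\bigl(f(z)f(qz)\bigr)$ at $z_0$, and such zeros are in general \emph{not} zeros of $D_qf$: if $f$ has poles at both $z_0$ and $qz_0$, of orders $h\ne h'$, then $D_qf$ has a \emph{pole} of order $\max(h,h')$ at $z_0$, while $D_q(1/f)$ has a zero of order $\min(h,h')\ge 1$ there, producing a strictly positive deficit invisible to $N(r,\tfrac1{D_qf})$. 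Since your bound for the finite values already spends the whole budget $N_J(r)+\log r$, nothing is left to absorb this contribution, and adding the $\infty$-deficit on top would overshoot. The repair is the joint accounting the paper performs: bound the combined deficit by $N(r,f(z)=\infty)+N(r,f(qz)=\infty)+N(r,D_qf=0)-N(r,D_qf=\infty)+\log r$, so that the poles of $D_qf$ located at poles of $f(qz)$ cancel against the $N(r,f(qz))$ term, and then invoke the counting-function identity $N(r,f(qz)=\infty)=(1+o(1))N(r,f(z)=\infty)$ of \cite[Theorem 1.3]{zhang-korhonen}. On this last point, note also that the identity for the $N$-parts does not follow formally from \eqref{E1.3}: equality of characteristics $T(r,f(qz))=T(r,f)+o(T(r,f))$ does not by itself split into equality of the counting functions; one needs either the cited theorem or the proximity estimate applied in both directions (for $q$ and $q^{-1}$). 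With that bookkeeping the total lands exactly on $N_J(r)+\log r+o(T(r,f))$, and substituting into your first inequality recovers \eqref{E3.5}.
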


\begin{proof}
Since $f$ is not constant, we have $D_{q}f\not\equiv 0.$ Otherwise, $f(z)\equiv f(q^{k}z)$ for every $k\in\mathbb{N}$. Since $q^{k}\to 0$ as $k\to\infty,$ we get that $f(z)\equiv f(0)$, which is impossible. Hence we have $\nabla_{q}f:=f(qz)-f(z)\not\equiv 0.$ It follows from \cite[Theorem 3.1]{Barnett} that
\begin{eqnarray} (p-2)T(r,f)\leq \sum_{j=1}^{p}N(r, f=a_{j})-N_{\nabla_q}(r)+o(T(r,f))\end{eqnarray}
holds for all $r$ on a set of logarithmic density $1$, where $$N_{\nabla_q}(r)=2N(r, f)-N(r, \nabla_{q} f)+N(r,\frac{1}{\nabla_{q} f}).$$ Since $D_{q}f(z)=\frac{\nabla_{q}f(z)}{(q-1)z}$, it follows from the Jensen formula that  \begin{eqnarray*}
N(r,\frac{1}{D_{q}f})-N(r,D_{q}(f))&=&\int_{0}^{2\pi}\log |D_{q}f(re^{i\theta})|\frac{d\theta}{2\pi}+O(1)\\
&=&\int_{0}^{2\pi}\log |\nabla_{q}f(re^{i\theta})|\frac{d\theta}{2\pi}-\log r+O(1)\\
&=&N(r, \frac{1}{\nabla_{q} f})-N(r,\nabla_{q} f)-\log r+O(1).\end{eqnarray*} Hence, we have
$$N_{\nabla_q}(r)=2N(r, f)-N(r,D_{q}(f))+N(r,1/D_{q}f)+\log r+O(1).$$
Taking it into (15) yields the first inequality in (14), that is,
\begin{equation}
(q-2)T(r,f)\leq \sum_{j=1}^p N(r,f=a_j)-N_J(r)-\log r+o(T(r,f))
\end{equation}
holds in a set of $r$ with logarithmic density one.\vskip 2mm
\par From the definition of $\tilde{n}_{J}(r, f=a)$, when $a\in\mathbb{C}$, the difference between $n(r, f=a)$ and $\tilde{n}_{J}(r, f=a)$ happens at zeros of $D_{q}f(z)$ at which $f-a$ has a zero in the disk $|z|<r$. If $a=\infty$, $n(r, f=\infty)-\tilde{n}_{J}(r, f=\infty)$ enumerates at most the number of zeros of $D_{q}(\frac{1}{f(z)})$ at which $f(z)$ has a pole in the disk $|z|<r$, with due count of multiplicities. Since
\begin{equation}\label{20}
D_{q}\Big(\frac{1}{f(z)}\Big)=\frac{-D_{q}f(z)}{f(qz)f(z)},\end{equation}
the zeros of $D_{q}(\frac{1}{f(z)})$ originate from the poles of $f(qz)$,$f(z)$ or from the zeros of $D_{q}f(z)$. We note that the poles of $D_{q}f(z)$ must be among the poles of $f(z),f(qz)$ and the origin with simple multiplicity. Thus, the multiplicity of zeros of $D_{q}\frac{1}{f(z)}$ is no more than the sum of multiplicities of the poles of $f(z),f(qz)$ subtracting the multiplicity of poles of $D_{q}f(z)$. We will add $1$ to the upper bound when $z=0$ is one pole of $D_{q}f(z).$ Therefore, it follows from the above discussions that for distinct values $a_1,a_2,\cdots,a_p\in \mathbb{C}\cup\{\infty\}$,
\begin{equation}
\begin{split}\label{E1.1}
\sum_{j=1}^{p}&\left(N(r, f=a_{j})-\tilde{N}_{J}(r, f=a_{j})\right)\\
&\leq N(r, f(z)=\infty)+N(r,f(qz)=\infty)+N(r, D_{q}f(z)=0)\\
&\quad -N(r, D_{q}f(x)=\infty)+\log r.
\end{split}\end{equation}
\cite[Theorem 1.3]{zhang-korhonen} says that for a meromorphic function $f$ with zero order,
\begin{equation}\label{E1.2}N(r, f(qz)=\infty)=(1+o(1))N(r, f(z)=\infty)
\end{equation}on a set of lower logarithmic density one. Then combining \eqref{E1.1} with \eqref{E1.2} follows
\begin{equation*}\begin{split}
\sum_{j=1}^{p}N(r, f=a_{j})&\leq \sum_{j=1}^{p}\tilde{N}_{J}(r, f=a_{j})+(2+o(1))N(r, f(z)=\infty)\\
&\quad +N(r, D_{q}f(z)=0)-N(r, D_{q}f(x)=\infty)+\log r\\
&=\sum_{j=1}^{p}\tilde{N}_{J}(r, f=a_{j})+N_{J}(r)+\log r+o(T(r, f)).
\end{split}\end{equation*}
Submitting this into (16), we get the conclusion of this theorem.
\end{proof}

\subsection{Defect relation for Jackson difference operator} For a given meromorphic function $f,$ the Nevanlinna defect $\delta(a, f),$ multiplicity index $\vartheta(a,f)$ and ramification index $\Theta(a,f)$ of $f$ at $a\in\mathbb{C}\cup\{\infty\}$ are defined respectively as $$\delta(a, f):=1-\limsup_{r\rightarrow\infty}\frac{N(r, \frac{1}{f-a})}{T(r, f)},\quad \vartheta(a, f):=\liminf_{r\rightarrow\infty}\frac{N(r, \frac{1}{f-a})-\overline{N}(r, \frac{1}{f-a})}{T(r, f)},$$
and
$$\Theta(a, f):=1-\limsup_{r\rightarrow\infty}\frac{\overline{N}(r, \frac{1}{f-a})}{T(r, f)}.$$ It follows from Nevanlinna's second fundamental theorem \cite{hayman} that
$$\sum_{a\in\mathbb{C}\cup\{\infty\}}(\delta(a ,f)+\vartheta(a, f))\leq \sum_{a\in\mathbb{C}\cup\{\infty\}} \Theta(a, f)\leq 2.$$

Next we introduce the Jackson analogues of the multiplicity index and ramification index of $f$ at $a$ as in the classical Nevanlinna theory.\par

\begin{definition}
Let $f$ be a meromorphic function and $a\in\mathbb{C}\cup\{\infty\}.$ The Jackson's multiplicity index $\vartheta_{J}(a, f)$ and ramification index $\Theta_{J}(a,f)$ of $f$ at $a$ are defined respectively as $$\vartheta_{J}(a, f):=\liminf_{r\rightarrow\infty}\frac{N(r, \frac{1}{f-a})-\tilde{N}_{J}(r, f=a)}{T(r,f)}$$ and $$\Theta_{J}(a, f):=1-\limsup_{r\rightarrow\infty}\frac{\tilde{N}_{J}(r, f=a)}{T(r,f)}.$$
\end{definition}

By the second fundamental theorem for Jackson difference operator (Theorem \ref{T2}), we get the following defect relation for Jackson difference operator. The defect relations for Askey-Wilson difference operator \cite{Wilson} and Wilson difference operator \cite{Wilson2} are already given in \cite{Wilson,Wilson2}. \par

\begin{theorem}\label{T3}
Let $0<|q|<1,$ and $f$ be a nonconstant meromorphic function of zero order. Then we have $$\sum_{a\in\mathbb{C}\cup\{\infty\}}(\delta(a,f)+\vartheta_{J}(a,f))\leq\sum_{a\in\mathbb{C}\cup\{\infty\}}\Theta_{J}(a,f)\leq 2.$$
\end{theorem}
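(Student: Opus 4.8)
The plan is to derive the defect relation directly from the second fundamental theorem for the Jackson difference operator (Theorem \ref{T2}), mimicking the classical deduction of Nevanlinna's defect relation from his second fundamental theorem. First I would record the two quantities to be bounded. By the definitions of the Jackson multiplicity index and ramification index, for any finite collection of distinct values $a_1,\ldots,a_p \in \mathbb{C}\cup\{\infty\}$, one has
\begin{equation*}
\delta(a_j,f)+\vartheta_J(a_j,f) \;\leq\; \Theta_J(a_j,f),
\end{equation*}
which follows from the elementary pointwise inequality $N(r,\frac{1}{f-a_j}) \geq \tilde{N}_J(r,f=a_j)$ together with the algebraic identity
\begin{equation*}
1-\frac{N(r,\frac{1}{f-a_j})}{T(r,f)} + \frac{N(r,\frac{1}{f-a_j})-\tilde{N}_J(r,f=a_j)}{T(r,f)} = 1 - \frac{\tilde{N}_J(r,f=a_j)}{T(r,f)}.
\end{equation*}
Taking $\limsup$/$\liminf$ appropriately and using the standard superadditivity of $\liminf$ against $\limsup$ yields the left-hand inequality of the theorem, so it suffices to establish $\sum_{a}\Theta_J(a,f)\leq 2$.

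The heart of the argument is the bound on $\sum_a \Theta_J(a,f)$. I would fix an arbitrary finite set of $p\geq 2$ distinct values and apply the second line of \eqref{E3.5} from Theorem \ref{T2}, namely
\begin{equation*}
(p-2)T(r,f) \leq \sum_{j=1}^{p}\tilde{N}_J(r,f=a_j) + o(T(r,f)),
\end{equation*}
valid on a set of logarithmic density one. Dividing by $T(r,f)$, rearranging to isolate $\sum_{j=1}^{p}\bigl(1-\tilde{N}_J(r,f=a_j)/T(r,f)\bigr)$, and passing to the upper limit as $r\to\infty$ along the density-one set gives
\begin{equation*}
\sum_{j=1}^{p}\Theta_J(a_j,f) \leq 2.
\end{equation*}
Since this holds for every finite subcollection, a supremum over all finite subsets of $\mathbb{C}\cup\{\infty\}$ upgrades it to the full (possibly infinite) sum $\sum_{a\in\mathbb{C}\cup\{\infty\}}\Theta_J(a,f)\leq 2$.

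The main obstacle, as usual in defect-relation arguments, is the interplay of $\limsup$ and $\liminf$ when summing the indices, together with the exceptional-set bookkeeping. Because Theorem \ref{T2} only holds on a set of logarithmic density one (not for all large $r$), I must be careful that the $\limsup$ defining $\Theta_J$ and the $\liminf$ defining $\vartheta_J$ are taken along sequences compatible with this density-one set, so that the inequality survives the limiting process; the subadditivity of $\limsup$ and the fact that $\limsup(-x_r)=-\liminf(x_r)$ must be applied termwise with care. A second delicate point is justifying the passage from arbitrary finite subcollections to the full sum over $\mathbb{C}\cup\{\infty\}$: one argues that if the total sum exceeded $2$, a sufficiently large finite subcollection would already violate the finite bound, a contradiction. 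These are standard maneuvers in Nevanlinna theory, so I expect the proof to be short, with the only real content being the correct alignment of the limit operations against the density-one exceptional set.
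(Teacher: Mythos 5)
Your proposal is correct and takes essentially the same route as the paper's proof: both deduce the result from Theorem \ref{T2} by dividing \eqref{E3.5} by $T(r,f)$, rearranging to isolate $\sum_{j=1}^{p}\bigl(1-\tilde{N}_{J}(r,f=a_j)/T(r,f)\bigr)$, and taking lower limits (using superadditivity of $\liminf$) to obtain $\sum_{j=1}^{p}(\delta(a_j,f)+\vartheta_{J}(a_j,f))\leq\sum_{j=1}^{p}\Theta_{J}(a_j,f)\leq 2$ for every finite collection. Your extra care with the termwise inequality $\delta+\vartheta_J\leq\Theta_J$, the density-one exceptional set, and the supremum over finite subsets merely makes explicit what the paper leaves implicit.
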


\begin{proof}
From Theorem \ref{T2}, dividing both sides of \eqref{E3.5} by the characteristic function $T(r,f)$, it yields that for any distinct values $a_{1}, a_{2},...,a_{p}\in\mathbb{C}\cup\{\infty\},$
$$(p-2)\leq \sum_{j=1}^{p}\frac{\tilde{N}_{J}(r, f=a_{j})}{T(r,f)}
+\frac{o(T(r,f))}{T(r,f)}.$$  Rearranging the terms, we then obtain $$\sum_{j=1}^{p}\Big(1-\frac{\tilde{N}_{J}(r, f=a_{j})}{T(r,f)}\Big)\leq 2+\frac{o(T(r,f))}{T(r,f)}.$$ Taking $\liminf$ on both sides as $r\rightarrow\infty,$ we have
$$\sum_{j=1}^{p}\left(\delta(a_{j}, f)+\vartheta_{J}(a_{j}, f)\right)\leq\sum_{j=1}^{p}\Theta_{J}(a_{j}, f)\leq 2.$$
\end{proof}
If $\Theta_{J}(a, f)>0$, we say that $a \in \mathbb{C}\cup\{\infty\}$ is a Jackson-Nevanlinna deficient value. From the defect relation for Jackson difference operator (Theorem \ref{T3}), we have the following result.\par

\begin{theorem} Let $f$ be a nonconstant meromorphic function with zero order. Then $f$ has at most a countable number of Jackson-Nevanlinna deficient values.
\end{theorem}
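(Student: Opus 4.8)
The plan is to deduce everything from the defect relation already established in Theorem \ref{T3}. The core observation is that, since $\Theta_{J}(a,f)\geq 0$ for every $a$ and every finite subcollection of distinct values contributes a total at most $2$, the (possibly infinite) sum $\sum_{a\in\mathbb{C}\cup\{\infty\}}\Theta_{J}(a,f)$ is a series of nonnegative terms whose partial sums are uniformly bounded by $2$. A series of nonnegative terms with bounded partial sums converges, and in particular only countably many of its terms can be nonzero; this is exactly the assertion of the theorem. So the whole proof is a routine ``pigeonhole on the deficiencies'' argument.

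First I would fix the setup: by definition a Jackson-Nevanlinna deficient value is a point $a\in\mathbb{C}\cup\{\infty\}$ with $\Theta_{J}(a,f)>0$, and I want to show the set $E$ of all such points is countable. For each positive integer $n$, introduce the level set
\begin{equation*}
E_{n}=\Big\{a\in\mathbb{C}\cup\{\infty\}:\Theta_{J}(a,f)>\tfrac{1}{n}\Big\},
\end{equation*}
so that $E=\bigcup_{n=1}^{\infty}E_{n}$. The key step is to show each $E_{n}$ is finite. Indeed, if $E_{n}$ contained more than $2n$ distinct points, say $a_{1},\dots,a_{p}$ with $p>2n$, then applying Theorem \ref{T3} to these $p$ values would give
\begin{equation*}
2\geq\sum_{j=1}^{p}\Theta_{J}(a_{j},f)>p\cdot\tfrac{1}{n}>2,
\end{equation*}
a contradiction. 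Hence $E_{n}$ has at most $2n$ elements.

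Finally, $E=\bigcup_{n=1}^{\infty}E_{n}$ is a countable union of finite sets and is therefore countable, which is the desired conclusion. I do not anticipate any genuine obstacle here; the only point that needs a word of care is that Theorem \ref{T3} is stated for an arbitrary finite collection of distinct values $a_{1},\dots,a_{p}$, so the uniform bound $2$ holds for every finite subset of $E$ and legitimately passes to the countability statement via the level-set decomposition above. This mirrors the classical argument that a meromorphic function has at most countably many Nevanlinna deficient values, now transcribed to the Jackson setting.
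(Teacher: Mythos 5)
Your proof is correct and follows exactly the route the paper intends: the paper states this theorem without an explicit proof, simply noting that it follows from the defect relation (Theorem \ref{T3}), and your level-set decomposition $E_{n}=\{a:\Theta_{J}(a,f)>\tfrac{1}{n}\}$ with the bound $\#E_{n}\leq 2n$ is precisely the standard argument being invoked. The only point worth making explicit is that Theorem \ref{T3}'s proof establishes the bound $\sum_{j=1}^{p}\Theta_{J}(a_{j},f)\leq 2$ for every finite collection of distinct values, which is exactly the form you use.
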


\subsection{Picard theorem for Jackson difference operator} We call $a\in \mathbb{C}\cup\{\infty\}$ a Jackson-Picard value of $f$ if
$\tilde{n}_{J}(r, f=a)=0.$ 
Since the Jackson difference operator sends a polynomial of degree $n$ to a polynomial of degree $n-1,$ we know that non-constant polynomials just have no Jackson-Picarl value unless $\infty.$ It is similar to the property for polynomials in the classical value distribution. Then we deduce the following Jackson type Picard theorem from the second fundamental theorem for Jackson difference operator (Theorem \ref{T2}). This is different from the case of the so-called Askey-Wilson-Picard value and AW-Picard theorem \cite{Wilson}.\par

\begin{theorem}\label{T4} Let $0<|q|<1,$ and let $f$ be a meromorphic function with zero order. If $f$ has three distinct Jackson-Picard values, then $f$ must be a constant.
\end{theorem}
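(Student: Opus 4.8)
The plan is to argue by contradiction, feeding the three Picard values directly into the second fundamental theorem for the Jackson operator (Theorem \ref{T2}). Suppose $f$ is nonconstant of zero order and that $a_{1},a_{2},a_{3}\in\mathbb{C}\cup\{\infty\}$ are three distinct Jackson--Picard values, so that $\tilde{n}_{J}(r,f=a_{j})=0$ for every $r$ and every $j$. The first thing I would record is that this hypothesis actually annihilates the corresponding counting functions: every summand in the definition of $\tilde{n}_{J}$ has the form $h-k$ with $k=\min\{h,k'\}\le h$, so each contribution is nonnegative and $\tilde{n}_{J}(r,f=a_{j})=0$ forces $\tilde{N}_{J}(r,f=a_{j})=0$ identically.

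Next I would apply Theorem \ref{T2} with $p=3$ to these three values. The second inequality in \eqref{E3.5} then collapses, since its right-hand counting terms all vanish, to
$$T(r,f)=(p-2)T(r,f)\le \sum_{j=1}^{3}\tilde{N}_{J}(r,f=a_{j})+o(T(r,f))=o(T(r,f))$$
for all $r$ lying in a set $E$ of logarithmic density one.

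Finally I would extract the contradiction. Because $f$ is nonconstant, its characteristic $T(r,f)$ increases without bound; dividing the displayed estimate by $T(r,f)$ and letting $r\to\infty$ along $E$ yields $1\le o(1)$, which is absurd. Hence no nonconstant zero-order $f$ can carry three distinct Jackson--Picard values, and the theorem follows. I do not anticipate a serious analytic obstacle here, as the argument is essentially a one-line consequence of Theorem \ref{T2}; the only steps that genuinely require care are verifying the nonnegativity of $\tilde{N}_{J}$ so that the Picard hypothesis truly kills the right-hand side, and confirming that the exceptional set concealed in the $o(T(r,f))$ term does not obstruct the passage to the limit. The latter is harmless precisely because a set of logarithmic density one is cofinal in $[1,\infty)$ and $T(r,f)\to\infty$ monotonically, so the limit may be taken along $E$.
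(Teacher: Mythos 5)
Your proposal is correct and follows essentially the same route as the paper: both feed the three Jackson--Picard values into the second fundamental theorem (Theorem \ref{T2}) with $p=3$, note that the Picard hypothesis makes $\sum_{j}\tilde{N}_{J}(r,f=a_{j})$ vanish, and derive the contradiction $T(r,f)\leq o(T(r,f))$ on a set of logarithmic density one. Your added checks (nonnegativity of the $h-k$ summands in $\tilde{n}_{J}$, and the harmlessness of the exceptional set) are details the paper leaves implicit, but they do not change the argument.
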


\begin{proof}If $f$ has three distinct Jackson-Picard values $a_{1}, a_{2}, a_{3}$ (obviously, $f$ can not be a non-constant polynomial), then by the definition, we get
$\sum_{j=1}^{3}\tilde{N}_{J}(r, f=a_{j})=0.$ Assume $f$ is not a constant, then from Theorem \ref{T2}, we get
\begin{eqnarray}T(r, f)\leq o(T(r,f))\end{eqnarray}
for all $r$ on a set of logarithmic density 1, which is a contradiction.
\end{proof}

\subsection{Five-value theorem for Jackson difference operator} In 1929, R. Nevanlinna \cite{hayman} obtained the well-known five-value theorem that if two nonconstant meromorphic functions share five distinct values in $\mathbb{C}\cup\{\infty\},$ that is, the pre-images of the five points (ignoring their multiplicities) in $\mathbb{C}$ are equal, then the two functions must be identical. This has led to the development of the uniqueness problem for meromorphic functions \cite{yi-yang}. Now, we try to obtain a five-value theorem for Jackson difference operator. Before that, we need to make clear what is the meaning of two functions ``sharing" a value in the Jackson sense. \par

\begin{definition}
Let $f$ and $g$ be two nonconstant meromorphic functions, and let $a$ be a value of $\mathbb{C}\cup\{\infty\}.$ Denote by $E_{f}(a)$ the subset of $\mathbb{C}$ where $f(z)=a.$ Then we say that $f$ and $g$ share the value $a$ in the Jackson sense provided that $E_{f}(a)=E_{g}(a)$ except perhaps on the subset of $\mathbb{C}$ such that $$\tilde{N}_{J}(r, f(z)=a)-\tilde{N}_{J}(r, g(z)=a)=o\left(T(r, f)+T(r, g)\right).$$
\end{definition}

We show below a natural extension of the five-value theorem to the Jackson operator on meromorphic functions with zero order.\par

\begin{theorem}\label{T7}
Let $f$ and $g$ be two nonconstant meromorphic functions of zero order. If $f$ and $g$ share five distinct values $a_{1}$, $a_{2}$, $a_{3}$, $a_{4}$, $a_{5}$ $\in\mathbb{C}\cup\{\infty\}$ in the Jackson sense, then $f(z)\equiv g(z).$
\end{theorem}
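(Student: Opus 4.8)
The plan is to mimic Nevanlinna's classical proof of the five-value theorem, substituting the Jackson second fundamental theorem (Theorem \ref{T2}) for the classical one. Suppose for contradiction that $f\not\equiv g$. The strategy is to apply Theorem \ref{T2} to each of $f$ and $g$ with $p=5$, add the two inequalities, and then bound the right-hand side using the sharing hypothesis together with an upper bound on the number of common $a_j$-points coming from the fact that $f-g$ vanishes at every shared point.

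First I would record, from Theorem \ref{T2} applied with the five values $a_1,\ldots,a_5$, that
\begin{equation*}
3T(r,f)\leq \sum_{j=1}^{5}\tilde{N}_{J}(r,f=a_j)+o(T(r,f)),\qquad
3T(r,g)\leq \sum_{j=1}^{5}\tilde{N}_{J}(r,g=a_j)+o(T(r,g))
\end{equation*}
for all $r$ outside a set of logarithmic density zero. By the sharing hypothesis, $\tilde{N}_{J}(r,f=a_j)=\tilde{N}_{J}(r,g=a_j)+o(T(r,f)+T(r,g))$ for each $j$, so adding the two displays gives
$3\bigl(T(r,f)+T(r,g)\bigr)\leq 2\sum_{j=1}^{5}\tilde{N}_{J}(r,f=a_j)+o(T(r,f)+T(r,g))$. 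The core of the argument is then to show that each shared point contributes to the counting function of the auxiliary function $h:=f-g$ (or an appropriate modification when $\infty$ is among the $a_j$), so that the left-hand characteristic sum is dominated by $N(r,1/h)$-type quantities, which in turn is bounded by $T(r,h)\leq T(r,f)+T(r,g)+O(1)$. Comparing the resulting coefficients $\tfrac{6}{?}$ against the factor $3+3=6$ on the left forces a contradiction, exactly as in the classical count where five shared values exceed the admissible deficiency budget of $4$.

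More concretely, I would let $S(r)$ denote the common support of the shared-value points counted via $\tilde{N}_{J}$, and observe that at any $z$ where $f(z)=g(z)=a_j$ the function $f-g$ has a zero; hence $\sum_{j=1}^{5}\tilde{N}_{J}(r,f=a_j)\leq \tilde{N}_{J}(r,1/(f-g))+o(\cdots)\leq T(r,f-g)+o(\cdots)\leq T(r,f)+T(r,g)+o(\cdots)$, after handling the value $a_j=\infty$ by passing to $1/(f-a)$ for an auxiliary finite $a$. Substituting this into the added inequality yields $3(T(r,f)+T(r,g))\leq 2(T(r,f)+T(r,g))+o(T(r,f)+T(r,g))$, i.e. $T(r,f)+T(r,g)\leq o(T(r,f)+T(r,g))$ along a set of logarithmic density one, which is absurd for nonconstant $f,g$. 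Therefore $f\equiv g$.

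The main obstacle I anticipate is the bookkeeping for the \emph{truncated} Jackson counts $\tilde{N}_{J}$ rather than the ordinary $N$: since $\tilde{N}_{J}(r,f=a)$ subtracts a term $k=\min\{h,k'\}$ tied to the zeros of $D_q f$, one must verify that the zero of $f-g$ at a shared point carries at least the multiplicity recorded by $\tilde{N}_{J}$, so that the inequality $\sum_j\tilde{N}_{J}(r,f=a_j)\leq N(r,1/(f-g))+o(\cdots)$ genuinely holds. A clean way to sidestep the subtlety is to note $\tilde{N}_{J}(r,f=a)\leq \overline{N}(r,1/(f-a))\leq N(r,1/(f-a))$ and run the comparison at the level of unreduced counting functions where needed, accepting the weaker constant but checking that $6>4$-style slack survives; the logarithmic-density exceptional sets from Theorem \ref{T2} must also be intersected, which is harmless since a finite union of density-one sets still has logarithmic density one.
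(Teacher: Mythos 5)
Your proposal follows essentially the same route as the paper's proof: apply Theorem \ref{T2} to $f$ and $g$ with $p=5$, use the Jackson-sharing hypothesis to replace the $g$-counting functions by the $f$-counting functions, bound $\sum_{j}\tilde{N}_{J}(r,f=a_j)$ by $T\left(r,\frac{1}{f-g}\right)\leq T(r,f)+T(r,g)+O(1)$ via the zeros of $f-g$ at shared points, and conclude $T(r,f)+T(r,g)\leq o\left(T(r,f)+T(r,g)\right)$ on a set of logarithmic density one — this is, up to a harmless factor of $3$, exactly the paper's chain of inequalities.

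However, the specific patch you propose for the truncation subtlety is based on a false inequality: $\tilde{N}_{J}(r,f=a)\leq \overline{N}\left(r,\frac{1}{f-a}\right)$ does not hold in general. In the classical theory the truncated count collapses to $\overline{N}$ because $f'$ automatically vanishes to order $h-1$ at an $a$-point of multiplicity $h$; but the Jackson derivative $D_qf$ need not vanish at a multiple $a$-point at all. Indeed, if $f-a$ has a zero of order $h\geq 2$ at $z_0\neq 0$ while $f(qz_0)\neq a$, then $D_qf(z_0)=\frac{f(qz_0)-a}{(q-1)z_0}\neq 0$, so $k'=0$ and that point contributes its full multiplicity $h$ to $\tilde{n}_{J}(r,f=a)$, exceeding its contribution $1$ to $\overline{n}$. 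Your fallback of running the comparison with unreduced counting functions does not close the gap either, since at a shared point the zero of $f-g$ has multiplicity only at least $\min\{h_f,h_g\}$, which can be strictly smaller than the weight $\tilde{n}_{J}$ assigns on the $f$-side. You correctly identified this bookkeeping as the main obstacle; it is worth noting that the paper's own proof does not address it — it simply asserts $\sum_{j}\tilde{N}_{J}(r,f=a_j)\leq \tilde{N}_{J}(r,f-g=0)$ without justification — so your main argument is at the same level of rigor as the published one, but the inequality you offer to make it rigorous is wrong and would need to be replaced by a genuine analysis of how $\tilde{n}_{J}$ behaves at shared points (also, what is needed for the exceptional sets is that a finite intersection of density-one sets, equivalently a finite union of density-zero sets, is harmless, not a union of density-one sets).
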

\begin{proof}
The proof is similar to the classical one in Hayman's book\cite{hayman}. We assume the contrary that $f$ and $g$ are not identical. Applying Theorem \ref{T2} to $f,g$ and choosing $p=5$ yields $$3(T(r,f)+T(r,g))\leq\sum_{j=1}^{5}(\tilde{N}_{J}(r, f=a_{j})+\tilde{N}_{J}(r, g=a_{j}))+o(T(r,f)+T(r, g))$$ for all $r$ on a set of logarithmic density 1. Since $f$ and $g$ share the five distinct values $a_{1}$, $a_{2}$, $a_{3}$, $a_{4}$, $a_{5}$ $\in\mathbb{C}\cup\{\infty\}$ in the Jackson sense, $E_{f}(a_{j})=E_{g}(a_{j})$ except perhaps on the subset of $\mathbb{C}$ such that $$\tilde{N}_{J}(r, f(z)=a_{j})-\tilde{N}_{J}(r, g(z)=a_{j})=o\left(T(r, f)+T(r, g)\right)$$ for all $j=1, 2, \ldots, 5.$ Under the assumption that $f$ and $g$ are not identical,  we deduce that
\begin{eqnarray}\label{E5.2}
&&T(r,f)+T(r,g)\leq \frac{2}{3}\sum_{j=1}^{5}\tilde{N}_{J}(r, f=a_{j})+o\left(T(r,f)+T(r,g)\right)
\end{eqnarray} for all $r$ on a set of logarithmic density 1. At the same time, $f$ and $g$ sharing $a_1,a_2,\cdots,a_5$ in the Jackson sense implies \begin{eqnarray*}
\sum_{j=1}^{5}\tilde{N}_{J}\left(r, f=a_{j}\right)&\leq& \tilde{N}_{J}\left(r, f-g=0\right)\\&\leq& T\left(r, \frac{1}{f-g}\right)\\&\leq& T(r, f)+T(r, g)+O(1). \end{eqnarray*}
Submitting this into \eqref{E5.2} gives
\begin{eqnarray*}
\frac{1}{3}(T(r, f)+T(r, g))\leq o\left(T(r,f)+T(r,g)\right)
\end{eqnarray*} for all $r$ on a set of logarithmic density 1, which is a contradiction.
\end{proof}

\section{The Jackson kernel and two linearly independent solutions of second order Jackson difference equations}
We use $Ker(D_{q})$ to denote the kernel of Jackson difference operator $D_{q}$, where $0<|q|<1$. A meromorphic function $f$ belonging to $Ker(D_{q})$ means $D_{q}f\equiv 0$. If $f\in Ker(D_{q})$, then $f(z)\equiv f(q^kz)$ for any $k\in \mathbb{N}$. Since $q^{k}\to 0$ as $k\to+\infty,$ according to the identity theorem of holomorphic functions, we get that $f$ must be a constant. The conclusion is the same as the basic knowledge that any meromorphic function $f\in Ker(\frac{d}{dz})$ must be a constant.\vskip 2mm
\par
Let two entire functions $f_{1}$ and $f_{2}$ be linearly independent solutions of the linear second order differential equations $$f''+A(z)f=0,$$
 where $A$ is an entire function. Bank and Laine \cite{bank-laine} observed that the Wronskian determinant of $f_1,f_2$
 $$W(f_{1}, f_{2})=\left|\renewcommand\arraystretch{1.3}
 \begin{array}{cc}
                                                                                                                                          f_{1} & f_{2} \\
                                                                                                                                           f_{1}^{'} & f_{2}^{'}
                                                                                                                                         \end{array}\right|\in Ker(\frac{d}{dz}),$$                                                                                                                                       \noindent
that is $\frac{d}{dz}W(f_{1}, f_{2})\equiv 0.$ Based on the fact, they investigated the complex oscillation theory of second
order differential equations \cite{laine}.\vskip 2mm
\par
Now we define the Jackson $q$-Casorati determinant of $f_{1}$ and $f_{2}$ by
\begin{eqnarray}C_{J}(f_{1}, f_{2})=\left|\renewcommand\arraystretch{1.3}
\begin{array}{cc}
                                            f_{1} & f_{2} \\
                                            D_{q}f_{1} & D_{q}f_{2}
                                          \end{array}
\right|=f_{1}\cdot D_{q}f_{2}-f_{2}\cdot D_{q}f_{1}.
\end{eqnarray}
By \cite[Theorem 4.4.1]{bangerezako}, the linear Jackson $q$-difference equation
\begin{equation*}D_{q}^{2}f(z)+a_{1}(z)D_{q}f(z)+a_{0}(z)f(z)=0,\end{equation*}
with the coefficients $a_{1}$ and $a_{0}$ being analytic at the origin, admits two linear independent analytic solutions at the origin. Below, we show  an interesting phenomenon that for two linearly independent analytic solutions $f_1,f_2$ at the origin of the linear Jackson $q$-difference equation $$D_{q}^{2}f(z)+A(z)f(z)=0,$$
$C_J(f_1,f_2)$ does not belong to the  $Ker(D_{q})$. This is different from the case of Wronskian determinant of two linear independent
solutions for $f''-Af=0$. \par

\begin{theorem}\label{T6} Let $0<|q|<1,$  and let $A(z)\not\equiv 0$ be a non-zero functions which is analytic at the origin. If $f_{1}$ and $f_{2}$ are two linearly independent analytic solutions at the origin of the linear Jackson $q$-difference equation $$D_{q}^{2}f(z)+A(z)f(z)=0,$$
 then $C_{J}(f_{1}, f_{2})\not\in Ker D_{q}.$
\end{theorem}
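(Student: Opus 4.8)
The plan is to compute $D_{q}C_{J}(f_{1},f_{2})$ explicitly and show that it is a nonzero constant-in-the-functional-sense multiple of $C_{J}(f_{1},f_{2})$ itself; since $C_{J}(f_{1},f_{2})$ turns out to be not identically zero, this forces $D_{q}C_{J}(f_{1},f_{2})\not\equiv 0$, i.e. $C_{J}(f_{1},f_{2})\notin Ker\,D_{q}$. The whole argument is local at the origin, where $f_{1},f_{2}$ and hence $C_{J}(f_{1},f_{2})$ are analytic.

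First I would apply the Jackson product rule $D_{q}(uv)(z)=u(qz)D_{q}v(z)+v(z)D_{q}u(z)$ (one of the derivative rules for $D_{q}$ recorded in Lemma \ref{L4.1}, and in any case immediate from the definition) to each term of $C_{J}(f_{1},f_{2})=f_{1}D_{q}f_{2}-f_{2}D_{q}f_{1}$. This gives
\[
D_{q}(f_{1}D_{q}f_{2})=f_{1}(qz)D_{q}^{2}f_{2}+(D_{q}f_{1})(D_{q}f_{2}),\qquad
D_{q}(f_{2}D_{q}f_{1})=f_{2}(qz)D_{q}^{2}f_{1}+(D_{q}f_{2})(D_{q}f_{1}),
\]
so that the mixed products $(D_{q}f_{1})(D_{q}f_{2})$ cancel on subtraction and $D_{q}C_{J}(f_{1},f_{2})=f_{1}(qz)D_{q}^{2}f_{2}-f_{2}(qz)D_{q}^{2}f_{1}$. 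Substituting the equation $D_{q}^{2}f_{i}=-A f_{i}$ yields $D_{q}C_{J}(f_{1},f_{2})=-A(z)\big[f_{1}(qz)f_{2}(z)-f_{2}(qz)f_{1}(z)\big]$. Writing the two Jackson derivatives in $C_{J}$ over the common denominator $(q-1)z$ gives the elementary identity $C_{J}(f_{1},f_{2})=\frac{f_{1}(z)f_{2}(qz)-f_{2}(z)f_{1}(qz)}{(q-1)z}$, so that $f_{1}(qz)f_{2}(z)-f_{2}(qz)f_{1}(z)=-(q-1)z\,C_{J}(f_{1},f_{2})$, and therefore I would arrive at the clean relation
\[
D_{q}C_{J}(f_{1},f_{2})=(q-1)\,z\,A(z)\,C_{J}(f_{1},f_{2}).
\]

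Next I would verify $C_{J}(f_{1},f_{2})\not\equiv 0$. If it vanished identically, then $f_{1}(z)f_{2}(qz)=f_{2}(z)f_{1}(qz)$, so the meromorphic germ $g=f_{2}/f_{1}$ (well defined since $f_{1}\not\equiv 0$ by linear independence) satisfies $g(qz)=g(z)$, i.e. $g\in Ker\,D_{q}$; by the kernel characterization established at the start of this section, $g$ must be constant, contradicting the linear independence of $f_{1}$ and $f_{2}$. Finally I would conclude: were $C_{J}(f_{1},f_{2})\in Ker\,D_{q}$, the boxed identity would give $(q-1)z\,A(z)\,C_{J}(f_{1},f_{2})\equiv 0$; since a nonzero analytic germ has only isolated zeros, a product of analytic germs vanishes identically only if one factor does, and $(q-1)z\,A(z)\not\equiv 0$ because $A\not\equiv 0$, so this would force $C_{J}(f_{1},f_{2})\equiv 0$, contradicting the previous step. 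Hence $C_{J}(f_{1},f_{2})\notin Ker\,D_{q}$.

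The computation is routine once the product rule is at hand; the point that deserves care is the reduction to $D_{q}C_{J}=(q-1)zA\,C_{J}$, namely tracking the sign and the factor $(q-1)z$ correctly when passing between $C_{J}(f_{1},f_{2})$ and the shifted determinant $f_{1}(qz)f_{2}-f_{2}(qz)f_{1}$. This factor $(q-1)z$ is precisely what obstructs $C_{J}(f_{1},f_{2})$ from lying in $Ker\,D_{q}$, in sharp contrast with the differential-equation setting, where the Wronskian is annihilated outright by $\tfrac{d}{dz}$.
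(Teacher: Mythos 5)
Your proposal is correct and follows essentially the same route as the paper: both apply the Jackson product rule (Lemma \ref{L4.1}(i)) to obtain the identity $D_{q}C_{J}(f_{1},f_{2})=(q-1)z\,A(z)\,C_{J}(f_{1},f_{2})$, and both establish $C_{J}(f_{1},f_{2})\not\equiv 0$ from linear independence via the quotient $f_{2}/f_{1}$ being $q$-invariant and hence constant by the identity theorem (the paper isolates this as Lemma \ref{L4.2}, while you reprove it inline). The only cosmetic difference is that you phrase the conclusion as a contradiction, whereas the paper deduces $D_{q}C_{J}\not\equiv 0$ directly from the product of nonvanishing germs.
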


To prove this theorem, we first recall that some basic properties of the Jackson difference operators (or say Jackson derivative).\par

\begin{lemma}\cite[Pages 10-11]{bangerezako}\label{L4.1} The Jackson difference operator satisfies the following rules.
\vskip 1mm
\par  (i)\,\,$D_{q}(fg)(z)=g(qz)D_{q}f(z)+f(z)D_{q}g(z)=f(qz)D_{q}g(z)+g(z)D_{q}f(z);$\vskip 1mm

\par (ii)\,\,$$D_{q}\big(\frac{f}{g}\big)(z)=\frac{g(z)D_{q}f(z)-f(z)D_{q}g(z)}{g(qz)g(z)};$$\vskip 1mm

\par (iii)\,\,\begin{equation*}\begin{split} D_{q}(f\circ g)(z)&=\frac{f(g(qz))-f(g(z))}{g(qz)-g(z)}\cdot\frac{g(qz)-g(z)}{qz-z}\\
&=:D_{q, g}f(g)\cdot D_{q, z}g(z).
\end{split}
\end{equation*}

\par (iv)\,\,\begin{equation*}D_{q, y}f^{-1}(y)=\frac{q}{D_{q, z} y},\quad \text{where}\,\,y:=f(z).\end{equation*}

\par (v)\,\,\begin{equation*} D_{q}\big[\int_{a}^{z}f(z)d_{q}z\big]=f(z),\quad
\int_{a}^{z}D_{q}f(z)d_{q}z=f(z)-f(a),
\end{equation*}
where $$\int_{a}^{z}f(t)d_{q}t:=(z-a)(1-q)\sum_{j=0}^{\infty}q^{j}f(a+q^{j}(x-a)).$$

\par(vi)\,\,\begin{equation*}
\int_{a}^{b}f(z)D_{q}g(z)d_{q}z=[fg]_{a}^{b}-\int_{a}^{b}g(qz)D_{q}f(z)d_{q}z.
\end{equation*}
\end{lemma}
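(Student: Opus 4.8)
The plan is to verify each of the six rules directly from the definition $D_q f(z)=\frac{f(qz)-f(z)}{(q-1)z}$ (together with the definition of the Jackson integral for (v)), mirroring the classical proofs but inserting the $q$-shift $z\mapsto qz$ wherever the classical argument passes to a limit. For the product rule (i) I would write $D_q(fg)(z)=\frac{f(qz)g(qz)-f(z)g(z)}{(q-1)z}$ and split the numerator by adding and subtracting a cross term: the decomposition $f(qz)g(qz)-f(z)g(z)=g(qz)\bigl(f(qz)-f(z)\bigr)+f(z)\bigl(g(qz)-g(z)\bigr)$ yields the first form $g(qz)D_qf+f\,D_qg$, while the symmetric split $f(qz)\bigl(g(qz)-g(z)\bigr)+g(z)\bigl(f(qz)-f(z)\bigr)$ yields the second. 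The quotient rule (ii) is the same manoeuvre after placing $D_q(f/g)$ over the common denominator $g(qz)g(z)$: the numerator $f(qz)g(z)-f(z)g(qz)$ is rewritten as $g(z)\bigl(f(qz)-f(z)\bigr)-f(z)\bigl(g(qz)-g(z)\bigr)$, and dividing by $(q-1)z$ produces the stated expression (valid wherever $g(z)g(qz)\neq0$).

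The chain rule (iii) is obtained by multiplying and dividing the difference quotient of $f\circ g$ by $g(qz)-g(z)$, which is exactly the factorisation displayed in the statement and is essentially a relabelling of the two factors as $D_{q,g}f(g)$ and $D_{q,z}g(z)$. For the inverse-function rule (iv) I would apply (iii) to the identity $f\circ f^{-1}=\mathrm{id}$ and use that the Jackson operator fixes the identity map, $D_q(\mathrm{id})(y)=\frac{qy-y}{qy-y}=1$. Combining these gives $1=D_{q,f^{-1}}f(f^{-1})\cdot D_{q,y}f^{-1}(y)$, so $D_{q,y}f^{-1}(y)$ is the reciprocal of the difference quotient of $f$ taken between the preimages $f^{-1}(y)$ and $f^{-1}(qy)$; the remaining task is to re-express that reciprocal in terms of $D_{q,z}y$, and this is precisely where the factor $q$ must be extracted.

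For the fundamental-theorem identities (v) I would compute directly from $\int_a^z f\,d_qt=(z-a)(1-q)\sum_{j\ge0}q^j f\bigl(a+q^j(z-a)\bigr)$. Applying $D_q$ to the integral and re-indexing the sum by $j\mapsto j+1$ makes all but the $j=0$ term cancel, leaving $f(z)$; integrating $D_qf$ and re-indexing likewise telescopes to $f(z)-f(a)$, using $q^{N}(z-a)\to0$ and continuity so that the tail vanishes. Finally (vi) follows by integrating the product rule (i), in the form $D_q(fg)=g(qz)D_qf+f\,D_qg$, over $[a,b]$ and applying the second identity of (v) to $fg$: this gives $[fg]_a^b=\int_a^b g(qz)D_qf\,d_qz+\int_a^b f\,D_qg\,d_qz$, which rearranges to the stated integration-by-parts formula.

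The main obstacle is the inverse rule (iv). Unlike the classical $(f^{-1})'=1/f'$, the $q$-shift does not commute with inversion: the clean reciprocal coming out of the chain rule is a difference quotient over the non-uniform nodes $f^{-1}(y)$ and $f^{-1}(qy)$, so identifying it with $D_{q,z}y$ up to the factor $q$ requires carefully matching the shift $y\mapsto qy$ on the image against the shift $z\mapsto qz$ on the source. A secondary technical point, present throughout (v)--(vi), is the convergence of the Jackson series, which is exactly where the hypotheses $0<|q|<1$ and regularity at the origin are used.
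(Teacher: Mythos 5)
The paper itself offers no proof of this lemma at all --- it is quoted with a citation to Bangerezako (pages 10--11) --- so your proposal is judged against the standard direct verifications. Your treatments of (i), (ii), (iii) and (vi) are exactly those verifications and are correct: the two cross-term splittings of $f(qz)g(qz)-f(z)g(z)$ give both forms of the product rule; the common-denominator splitting of $f(qz)g(z)-f(z)g(qz)$ gives (ii) wherever $g(z)g(qz)\neq 0$; (iii) is indeed just the multiply-and-divide factorisation (valid when $g(qz)\neq g(z)$, and it is essentially the definition of the notation $D_{q,g}f(g)$); and (vi) follows from (i) together with the second identity of (v) by linearity of the Jackson integral.

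There are, however, two genuine gaps. For (iv) you stop precisely at the crux: applying (iii) to $f\circ f^{-1}=\mathrm{id}$ expresses $D_{q,y}f^{-1}(y)$ as the reciprocal of a divided difference of $f$ between the nodes $f^{-1}(y)$ and $f^{-1}(qy)$, and you defer ``extracting the factor $q$'' to later. That step is not a routine matching of shifts --- it is the entire content of (iv), and as you have set things up it cannot succeed: your chain-rule identity produces the constant $1$, not $q$. A naive pointwise reading of (iv) already fails on $f(z)=z$, where $D_{q,y}f^{-1}(y)=1$ while $q/D_{q,z}y=q$, and on $f(z)=z^{2}$, where $D_{q,y}f^{-1}(y)=1/\bigl((\sqrt q+1)z\bigr)$ while $q/D_{q,z}y=q/\bigl((q+1)z\bigr)$. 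So (iv) has to be read with the source's specific convention tying the $y$-increment to the $z$-lattice, and your proposal neither pins down that convention nor derives the factor $q$; as written, this part is unproven. For (v), your telescoping claim silently assumes $a=0$. With the printed nodes $a+q^{j}(z-a)$, the shift $w\mapsto qw$ inside $D_{q}$ sends the node $a+q^{j}(z-a)$ to $qa+q^{j+1}(z-a)$, which lies \emph{off} the node family whenever $a\neq 0$, so ``re-indexing $j\mapsto j+1$'' does not cancel terms. Concretely, for $f(t)=t$ and $a\neq0$ one computes $D_{q}\bigl[\int_{a}^{z}t\,d_{q}t\bigr]=z+a-\tfrac{2a}{1+q}\neq z$, and for $f(t)=t^{2}$ one gets $\int_{a}^{z}D_{q}f\,d_{q}t=z^{2}+(q-1)az-qa^{2}\neq z^{2}-a^{2}$. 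Your argument is correct for $a=0$ (the standard Jackson integral); to cover general $a$ you must either define $\int_{a}^{z}:=\int_{0}^{z}-\int_{0}^{a}$, as the cited book effectively does, or pair the $a$-based integral with a correspondingly $a$-shifted difference operator --- neither of which your re-indexing provides.
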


\begin{lemma}\label{L4.2} Let $0<|q|<1,$ and
let $f_{1}$ and $f_{2}$ be two nonconstant functions being analytic at the origin. Then they are linearly independent if and only if $C_J(f_1,f_2)\not\equiv 0$.
\end{lemma}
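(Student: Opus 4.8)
The plan is to prove the two implications separately, in both cases passing through the quotient rule for the Jackson operator (Lemma \ref{L4.1}(ii)), which ties the Casorati determinant directly to $D_q(f_1/f_2)$. Applying part (ii) with $f=f_1$ and $g=f_2$ gives
$$D_q\!\left(\frac{f_1}{f_2}\right)(z)=\frac{f_2(z)D_qf_1(z)-f_1(z)D_qf_2(z)}{f_2(qz)f_2(z)}=\frac{-C_J(f_1,f_2)(z)}{f_2(qz)f_2(z)},$$
so that, away from the zeros of $f_2$, the vanishing of $C_J$ is equivalent to the vanishing of $D_q(f_1/f_2)$. Since $f_2$ is nonconstant it is not identically zero, and the germ $h:=f_1/f_2$ is a well-defined meromorphic function near the origin.

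For the direction ``linearly dependent $\Rightarrow C_J\equiv0$'' I would use only the linearity of $D_q$. If $c_1f_1+c_2f_2\equiv0$ with $(c_1,c_2)\neq(0,0)$, then neither $c_1$ nor $c_2$ can vanish, since otherwise one of the nonconstant functions would be identically zero; hence $f_1=\lambda f_2$ for a constant $\lambda$, and $D_qf_1=\lambda D_qf_2$. Substituting into the determinant collapses it termwise to $C_J(f_1,f_2)=\lambda f_2\,D_qf_2-\lambda f_2\,D_qf_2\equiv0$.

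For the substantial direction, assume $C_J(f_1,f_2)\equiv0$. Then the displayed identity forces $D_qh\equiv0$ as a meromorphic germ, i.e. $h\in Ker(D_q)$ locally, and it remains to show $h$ is constant. This last step is the main obstacle: the kernel computation at the start of Section 3 was carried out for functions holomorphic at $0$ (via $h(q^kz)=h(z)$ and $q^kz\to0$), whereas $h$ may have a pole at the origin coming from a zero of $f_2$, so that argument does not apply verbatim. I would settle it by a Laurent expansion: after clearing the factor $(q-1)z$, the relation $D_qh\equiv0$ is equivalent to $h(qz)\equiv h(z)$, and writing $h(z)=\sum_{n\geq -m}c_nz^n$ and comparing coefficients yields $c_n(q^n-1)=0$ for every $n$. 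Since $0<|q|<1$ we have $q^n\neq1$ for $n\neq0$, forcing $c_n=0$ for all $n\neq0$; thus $h\equiv c_0$ is constant (and $c_0\neq0$ because $f_1$ is nonconstant), giving the nontrivial relation $f_1=c_0f_2$. This coefficient comparison both rules out a pole at the origin and delivers the constancy, completing the proof.
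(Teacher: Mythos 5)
Your proof is correct, and its skeleton agrees with the paper's: both arguments hinge on the quotient $h=f_1/f_2$ and on the fact that $C_J(f_1,f_2)\equiv 0$ is equivalent to the $q$-invariance $h(qz)\equiv h(z)$, and the easy direction (dependence $\Rightarrow C_J\equiv 0$) is the same computation in both. The differences lie in the two ingredients. First, you reach the $q$-invariance through the quotient rule of Lemma \ref{L4.1}(ii), writing $D_q(f_1/f_2)=-C_J(f_1,f_2)/(f_2(qz)f_2(z))$, whereas the paper simply rewrites $C_J(f_1,f_2)=\bigl(f_1(z)f_2(qz)-f_2(z)f_1(qz)\bigr)/\bigl((q-1)z\bigr)$ and divides; this is cosmetic. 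Second, and more substantively, to pass from $h(qz)\equiv h(z)$ to ``$h$ is constant'' the paper iterates to get $h(z)\equiv h(q^k z)$ and invokes the identity theorem for meromorphic functions as $q^k z\to 0$, while you compare Laurent coefficients: $c_n q^n=c_n$ forces $c_n=0$ for $n\neq 0$ since $0<|q|<1$ gives $q^n\neq 1$. Your route is more elementary and self-contained, and it explicitly disposes of the possible pole of $h$ at the origin (coming from a zero of $f_2$), a point the paper's appeal to the identity theorem passes over in silence; it can be patched there too, since a pole at $0$ would make $h(q^k z_0)\to\infty$ contradict $h(q^k z_0)=h(z_0)$, but your coefficient argument handles it and the constancy in one stroke. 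The paper's softer argument, in exchange, is the same mechanism it already used for the kernel discussion at the start of Section 3 and needs no series expansion.
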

\begin{proof} We first assume that $f_{1}$ and $f_{2}$ are linearly independent. If $C_J(f_{1}, f_{2})\equiv 0$, then set $F(z)=f_{1}(z)/f_{2}(z)$, we have $F(z)\equiv F(qz)$. It implies $F(z)\equiv F(q^kz)$ for any $k\in\mathbb{N}$. Since $0<|q|<1,$ by the identity theorem of meromorphic functions, we know $\frac{f_{1}(z)}{f_{2}(z)}\equiv c$, where $c$ is a constant. It is a contradiction. Hence, $C_J(f_{1}, f_{2})\not\equiv 0.$\vskip 2mm
\par On the other hand,  suppose that $C_J(f_{1}, f_{2})\not\equiv 0.$ If $f_{1}$ and $f_{2}$ are linearly dependent,
then there exists one nonzero constant $c$ such that $f_{1}(z)\equiv c f_{2}(z).$ This gives
\begin{eqnarray*}C_J(f_{1}, f_{2})(z)=\frac{1}{(q-1)z}\Big(f_{1}(z)\cdot f_{2}(qz)-f_{2}(z)\cdot f_{1}(qz)\Big)=0.
\end{eqnarray*}
for any $z$ at the neighbourhood of origin. We also obtain a contradiction. Hence $f_{1}$ and $f_{2}$ must be linearly independent.
\end{proof}

\begin{proof}[Proof of Theorem \ref{T6}] By the equation \eqref{E4.1} and Lemma \ref{L4.1}(i), we get \begin{eqnarray*}
D_{q}(C_{J}(f_{1}, f_{2})(z))&=&D_{q}(f_{1}(z)\cdot D_{q}f_{2}(z)-f_{2}(z)\cdot D_{q}f_{1}(z))\\
&=&D_{q}(f_{1}(z)\cdot D_{q}f_{2}(z))-D_{q}(f_{2}(z)\cdot D_{q}f_{1}(z))\\
&=& f_{1}(qz)\cdot D_{q}^{2}f_{2}(z)-f_{2}(qz)\cdot D_{q}^{2}f_{1}(z)\\
&=&A(z)\left(f_{2}(qz)f_{1}(z)-f_{1}(qz)f_{2}(z)\right)\\
&=&A(z)(q-1)z\cdot C_J(f_{1}(z), f_{2}(z)).
\end{eqnarray*} Since $f_{1}$ and $f_{2}$ are linearly independent, we get from Lemma \ref{L4.2} that $C_J(f_{1}, f_{2})\not\equiv 0$ and thus $D_{q}(C_{J}(f_{1}, f_{2}))\not\equiv 0.$ This means $C_{J}(f_{1}, f_{2})\not\in Ker(D_{q}).$
\end{proof}

\begin{example}\label{Ex-1} Define $[n]_{q}!=\prod_{j=1}^{n}\frac{1-q^{j}}{1-q}$, and define the $q$-version of the exponential function $exp(z)$ as
$$\exp_{q}(z)=e_{q}^{z}=\sum_{n=1}^{\infty}\frac{z^{n}}{[n]_{q}!}.$$
\cite[Corollary 2.1.1]{bangerezako} says that $e_{q}^{z}\cdot e_{q^{-1}}^{-z}=1$. Whenever $0<|q|<1, e_{q}^{z}$ is analytic in the unit disc. Define the $q$-versions of the $\sin z$ and $\cos z,$ respectively, as
\begin{eqnarray*}
\cos_{q}(z)=\frac{e_{q}^{iz}+e_{q}^{-iz}}{2}, \,\,\, \sin_{q}(z)=\frac{e_{q}^{iz}-e_{q}^{-iz}}{2i}.
\end{eqnarray*} which satisfy $D_{q}\cos_{q}(z)=-\sin_{q}(z)$ and $D_{q}\sin_{q}(z)=\cos_{q}(z)$ (\cite[Pages 23-24]{bangerezako}). One can deduce that $\sin_{q}(z)$ and $\cos_{q}(z)$ are two linearly independent solutions of the Jackson difference equation $$D^{2}_{q}f(z)+f(z)=0,$$
and  $D_{q}(C_{J}(\sin_{q}(z), \cos_{q}(z)))\not\equiv 0$ by (i) and (iii) of Lemma \ref{L4.1}.
\end{example}

\section{Entire solutions of linear Jackson difference equations}
Recall that the logarithmic order \cite{chern} of $f$ is defined by \begin{eqnarray*}
\sigma_{\log}(f)=\limsup_{r\to\infty}\frac{\log^{+} T(r, f)}{\log\log r}.
\end{eqnarray*} Any non-constant rational function is of logarithmic order one, and thus each transcendental meromorphic function has
logarithmic order no less than one. Moreover, every meromorphic function with
finite logarithmic order must have order zero. For any given $s>1$, Chern \cite[Theorem 7.4]{chern} proved that there is an entire function of logarithmic order $s$.\vskip 2mm
\par Similarly, we define the logarithmic convergent exponent of the zeros of $f$ as
\begin{eqnarray*}\lambda_{\log}(f)=\limsup_{r\to\infty}\frac{\log^{+} N(r, \frac{1}{f})}{\log\log r},
\end{eqnarray*}
and logarithmic order of the non-integral counting function $n(r, \frac{1}{f})$ is equal to $\lambda_{\log}(f)-1$ (see \cite{chern}). Chern \cite[Theorem 7.1]{chern} proved that if $f$ is a transcendental meromorphic function of finite logarithmic order, then for any two distinct $a,b\in\mathbb{C}\cup\{\infty\}$ and for any $\varepsilon>0,$ $$T(r, f)\leq N(r, \frac{1}{f-a})+N(r, \frac{1}{f-b})+O((\log r)^{\sigma_{\log}(f)-1+\varepsilon}).$$ It means that $T(r,f)$ can be estimated by $N(r,f=a)+N(r,f=b)$. Especially, if $f$ is transcendental entire and $\sigma_{\log}(f)<\infty$, then $\lambda_{\log}(f-c)=\sigma_{\log}(f)$ holds for any finite value $c$.\vskip 2mm
\par

In this section, we consider the linear difference equation of the form
\begin{equation}\label{E4.1}D_{q}^{k}f(z)+A(z)f(z)=0\end{equation}
where $k\in\mathbb{N},$ $|q|\ne 0, 1,$ and $A$ is an entire function, and obtain the following theorem.\vskip 1mm\par

\begin{theorem}\label{T5} Let $f$ be a nontrivial entire solution of the linear Jackson $q$-difference equation (\ref{E4.1}).
\par
(i).  If $A$ is a nonzero polynomial, then we get that $|q|>1,$ and that $f$ must be transcendental and satisfy $\lambda_{\log}(f)=\sigma_{\log}(f)=2.$\vskip 1mm\par

(ii).  If $A$ is a nonzero rational function $\frac{P_{1}}{P_{2}}$ where the two polynomials $P_{1}$ and $P_{2}$ are prime each other, then $f$ is either transcendental satisfying $\lambda_{\log}(f)=\sigma_{\log}(f)=2,$ or a polynomial with $\deg(P_{2})-\deg(P_{1})=k.$
\vskip 1mm\par

(iii). If $A$ is a transcendental meromorphic function with $\delta(\infty, A)>0,$ then $f$ must be transcendental and satisfy $\infty\geq\sigma_{\log}(f)\geq\sigma_{\log}(A)+1$ (we note that this inequality on growth also holds for meromorphic solutions).
\end{theorem}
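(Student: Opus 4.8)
The plan is to treat the three cases by one common device: rewrite \eqref{E4.1} so that the extreme dilation $f(q^{k}z)$ is expressed through $f(z),f(qz),\dots,f(q^{k-1}z)$ and the coefficient, using the explicit expansion $D_{q}^{k}f(z)=(q-1)^{-k}z^{-k}q^{-k(k-1)/2}\sum_{j=0}^{k}(-1)^{j}\binom{k}{j}_{q}q^{j(j-1)/2}f(q^{k-j}z)$ already used in Theorem \ref{T1}. Before any growth estimate I would settle transcendence. In case (i), if $A$ is a nonzero polynomial and $f$ were a polynomial, then $D_{q}^{k}f$ is a polynomial and comparing degrees in $D_{q}^{k}f=-Af$ gives $\deg f-k=\deg f+\deg A$ when $\deg f\ge k$, and $Af\equiv 0$ when $\deg f<k$; both are impossible, so $f$ is transcendental. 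The same comparison handles (iii), since a polynomial $f$ would make $A=-D_{q}^{k}f/f$ rational.

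For (i) I would obtain the exact order and the sign of $q$ from a maximum-modulus functional inequality. Writing $f(q^{k}z)=P(z)f(z)-\sum_{j=1}^{k}c_{j}f(q^{k-j}z)$ with $P(z)=-(q-1)^{k}q^{k(k-1)/2}z^{k}A(z)$ a polynomial of degree $k+\deg A$ and $c_{j}$ constants, and setting $M(r,f)=\max_{|z|=r}|f(z)|$ so that $\max_{|z|=r}|f(q^{m}z)|=M(|q|^{m}r,f)$, I note first that on large circles all zeros of $P$ lie well inside, so $c\,r^{k+\deg A}\le|P(z)|\le C\,r^{k+\deg A}$ on $|z|=r$. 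If $|q|\le 1$ the right-hand side of the relation is dominated by $M(r,f)$ while the left-hand side is at least $c\,r^{k+\deg A}M(r,f)$, which is impossible for large $r$; hence $|q|>1$. With $|q|>1$ the same relation read in both directions gives $c\,r^{k+\deg A}M(r,f)\le M(|q|^{k}r,f)\le C\,r^{k+\deg A}M(r,f)+C'M(|q|^{k-1}r,f)$. Putting $V(s)=\log M(e^{s},f)$, which is convex, nondecreasing, with $V(s)/s\to\infty$ because $f$ is transcendental, the intermediate dilation term is absorbed since $V(s+k\log|q|)-V(s+(k-1)\log|q|)\to\infty$, and telescoping $V(s+k\log|q|)=V(s)+(k+\deg A)s+O(s)$ yields $V(s)\sim\frac{k+\deg A}{2k\log|q|}\,s^{2}$. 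Thus $T(r,f)$ is comparable to $(\log r)^{2}$, i.e. $\sigma_{\log}(f)=2$, and then $\lambda_{\log}(f)=\sigma_{\log}(f)=2$ follows from Chern's identity $\lambda_{\log}(f-c)=\sigma_{\log}(f)$ recalled before the theorem.

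Case (ii) I would reduce to (i): clearing denominators turns \eqref{E4.1} into the polynomial-coefficient equation $P_{2}(z)D_{q}^{k}f+P_{1}(z)f=0$, and the same maximum-modulus analysis applies with growth factor $r^{k+\deg P_{1}-\deg P_{2}}$, giving $\sigma_{\log}(f)=2$ (and $\lambda_{\log}(f)=\sigma_{\log}(f)$ by Chern) whenever $f$ is transcendental. If instead $f$ is a polynomial, then $Af=-D_{q}^{k}f$ forces $(\deg P_{1}-\deg P_{2})+\deg f=\deg f-k$ upon comparing degrees at infinity, i.e. $\deg P_{2}-\deg P_{1}=k$; this is exactly the stated dichotomy.

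For (iii) I would argue by growth transfer. The inequality is trivial if $\sigma_{\log}(f)=\infty$, so I may assume $\sigma_{\log}(f)<\infty$; then $f$ has order zero and Theorem \ref{T1} applies. From $A=-D_{q}^{k}f/f$ one has $m(r,A)=m(r,D_{q}^{k}f/f)$, while the hypothesis $\delta(\infty,A)>0$ gives $T(r,A)\le\frac{2}{\delta(\infty,A)}\,m(r,A)$ for large $r$, so a bound on $m(r,A)$ transfers to $T(r,A)$ and hence to $\sigma_{\log}(A)$. The hard part, and the crux of the whole theorem, is the \emph{sharp} logarithmic-difference estimate $m(r,D_{q}^{k}f/f)=O\big((\log r)^{\sigma_{\log}(f)-1+\varepsilon}\big)$, one power of $\log r$ stronger than the $o(T(r,f))$ of Theorem \ref{T1}; granting it, $\sigma_{\log}(A)\le\sigma_{\log}(f)-1$, which is the desired $\sigma_{\log}(f)\ge\sigma_{\log}(A)+1$, the meromorphic case being identical since only $m$, not $N$, enters. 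I expect the principal obstacles to be the careful absorption of the intermediate dilation terms via convexity of $\log M$ together with the minorization $|P(z)|\ge c\,r^{\deg P}$ on large circles in (i)--(ii), and the derivation of this sharp Jackson logarithmic-difference lemma in (iii).
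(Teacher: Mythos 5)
Your argument for (i) is sound and genuinely different from the paper's: the paper first gets $\sigma(f)=0$ and $\sigma_{\log}(f)\le 2$ from Bergweiler--Ishizaki--Yanagihara \cite{b-i-y} and then pins down $|q|>1$ and $\sigma_{\log}(f)=2$ with the Wen--Ye Wiman--Valiron theorem for $q$-differences (Lemma \ref{L4.4}), whereas your maximum-modulus/convexity scheme needs neither external result, and the absorption of the intermediate dilation term does work because $V(s+k\log|q|)-V(s+(k-1)\log|q|)\to\infty$ for transcendental $f$. But part (ii) contains a genuine error. When you isolate $f(q^{k}z)$ in $P_{2}D_{q}^{k}f+P_{1}f=0$, the coefficient of $f(z)$ is not a constant multiple of $z^{k}P_{1}(z)$: the $j=k$ term of the expansion of $D_{q}^{k}$ contributes a multiple of $P_{2}(z)f(z)$ as well, so the true coefficient is $Q(z)=-q^{k(k-1)/2}\left((q-1)^{k}z^{k}P_{1}(z)+(-1)^{k}P_{2}(z)\right)$ and the growth factor is $r^{\deg Q-\deg P_{2}}$. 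When $k+\deg P_{1}=\deg P_{2}$ the two leading terms can cancel, making $\deg Q-\deg P_{2}<0$ while your exponent $k+\deg P_{1}-\deg P_{2}$ equals $0$; your recursion would then give $\log M(r,f)=O(\log r)$, i.e.\ it would ``prove'' that every solution in this configuration is a polynomial. The paper's Example \ref{Ex4} shows this case really occurs: $E_{q}^{z}$ ($0<|q|<1$) is a transcendental solution of $D_{q}f+\frac{1}{(q-1)(z+1)}f=0$, for which $k+\deg P_{1}-\deg P_{2}=0$ but $Q$ is a nonzero constant and $f(qz)=f(z)/(z+1)$. The repair stays inside your framework --- run the analysis with $d=\deg Q-\deg P_{2}$, noting that a transcendental solution forces $d\neq 0$ (with $d>0$ exactly when $|q|>1$ and $d<0$ exactly when $|q|<1$, both giving $\log M(r,f)\asymp(\log r)^{2}$), while $d=0$ forces polynomial growth --- but this is a computation your proposal does not make.

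In (iii) you have identified, but not proved, the crux. The estimate $m(r,D_{q}^{k}f/f)=O\left((\log r)^{\sigma_{\log}(f)-1+\varepsilon}\right)$ that you ``grant'' is exactly the paper's Lemma \ref{L4.5}; it does not follow from Theorem \ref{T1}, and the paper must prove it separately by combining the explicit pointwise estimate of Barnett--Halburd--Korhonen--Morgan \cite{Barnett} for $m(r,f(qz)/f(z))$ with the Chiang--Feng choice $\rho=r\log r$ and the inequality $n(\rho,\cdot)\log\rho\le N(\rho^{2},\cdot)$, then telescoping over $q,q^{2},\dots,q^{k}$ and invoking the expansion of $D_{q}^{k}$. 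Your surrounding reductions (transcendence of $f$, the deficiency inequality $T(r,A)\le\frac{2}{\delta(\infty,A)}m(r,A)$, the trivial case $\sigma_{\log}(f)=\infty$, and the observation that only $m$ and not $N$ enters, so meromorphic solutions are handled identically) are correct and agree with the paper, but without a proof of this lemma part (iii) is not established.
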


From the conclusion (i) in Theorem \ref{T5}, we get the following corollary.\par

\begin{corollary}\label{C1} Let $f(\not\equiv 0)$ be a entire solution of the Jackson $q$-difference equation  \begin{eqnarray}\label{E4.4}D^{k}_{q}f(z)+A(z)f(q^{k}z)=0, \,\,\,\, (k\in\mathbb{N},\,\, 0<|q|<1),\end{eqnarray} where $A$ is a nonzero polynomial, then $f$ must be transcendental and satisfy\[\lambda_{\log}(f)=\sigma_{\log}(f)=2.\]
\end{corollary}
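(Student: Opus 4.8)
The plan is to reduce the equation \eqref{E4.4} to one of the form \eqref{E4.1} and then quote conclusion (i) of Theorem \ref{T5}. The key observation is that \eqref{E4.4} carries the extra coefficient $A$ on the \emph{top} shift $f(q^{k}z)$, whereas \eqref{E4.1} carries it on the \emph{bottom} shift $f(z)$; these two placements are interchanged by the duality $q\leftrightarrow q^{-1}$ together with a dilation of the variable. Since the hypothesis here is $0<|q|<1$ while conclusion (i) forces the base to have modulus $>1$, passing to the base $q^{-1}$ (with $|q^{-1}|>1$) is exactly what is needed for consistency.

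First I would record the operator identity
\begin{equation*}
(D_{q}^{k}f)(q^{-k}z)=q^{k(k-1)/2}\,D_{q^{-1}}^{k}f(z),
\end{equation*}
valid for every meromorphic $f$, where $D_{q^{-1}}$ denotes the Jackson operator with base $q^{-1}$. For $k=1$ this is the direct computation $D_{q^{-1}}f(z)=(D_{q}f)(q^{-1}z)$, which follows at once from $q^{-1}-1=-(q-1)q^{-1}$. The general case I would obtain by induction on $k$, using the dilation rule $D_{q}(f(c\,\cdot))(z)=c\,(D_{q}f)(cz)$ (the special case $g(z)=cz$ of the chain rule in Lemma \ref{L4.1}(iii)), which produces the accumulated factor $q^{-(0+1+\cdots+(k-1))}=q^{-k(k-1)/2}$; alternatively the identity can be read off the explicit expansion of $D_{q}^{k}$ recalled in the proof of Theorem \ref{T1}.

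Next, since \eqref{E4.4} holds for all $z$, I would replace $z$ by $q^{-k}z$; because $f(q^{k}\cdot q^{-k}z)=f(z)$ this gives $(D_{q}^{k}f)(q^{-k}z)+A(q^{-k}z)f(z)=0$. Substituting the identity above and dividing by $q^{k(k-1)/2}$ turns this into
\begin{equation*}
D_{q^{-1}}^{k}f(z)+B(z)f(z)=0,\qquad B(z):=q^{-k(k-1)/2}A(q^{-k}z).
\end{equation*}
Here $B$ is again a nonzero polynomial (of the same degree as $A$), the base $q^{-1}$ satisfies $|q^{-1}|>1$ (in particular $\neq 0,1$), and $f$ is the \emph{same} nontrivial entire function. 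Thus $f$ is a nontrivial entire solution of an equation of type \eqref{E4.1} with polynomial coefficient and base $q^{-1}$, so Theorem \ref{T5}(i), applied with $q$ replaced by $q^{-1}$, yields that $f$ is transcendental and $\lambda_{\log}(f)=\sigma_{\log}(f)=2$, which is the assertion of the corollary.

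The only real work is the operator identity of the second step; everything else is a substitution and a direct appeal to Theorem \ref{T5}(i). The main obstacle, therefore, is pinning down the exponent of $q$ in that identity correctly, which is why I would verify it by a clean induction using the dilation rule rather than by manipulating the full expansion of $D_q^k$. A minor point worth stating is that the logarithmic order and logarithmic exponent of convergence are intrinsic invariants of $f$, so they are read off directly from the transformed equation with no further normalization.
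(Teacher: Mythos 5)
Your proof is correct, and it rests on the same core idea as the paper's own proof: invert the base, $q\leftrightarrow q^{-1}$, so that \eqref{E4.4} becomes an equation of type \eqref{E4.1} whose base has modulus greater than one, then quote Theorem \ref{T5}(i). The difference is in the bookkeeping, and it buys you something. The paper keeps the variable fixed and treats $g(z)=f(q^{k}z)$ as the new unknown, applies Theorem \ref{T5}(i) to $g$, and must then transfer the conclusion back to $f$ via the zero-order invariance facts $N(r,1/h(qz))=(1+o(1))N(r,1/h)$ and $T(r,h(qz))=(1+o(1))T(r,h)$ quoted from Barnett et al.\ and Zhang--Korhonen. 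By instead substituting $z\mapsto q^{-k}z$ you obtain $D_{q^{-1}}^{k}f(z)+q^{-k(k-1)/2}A(q^{-k}z)f(z)=0$, an equation for $f$ itself, so Theorem \ref{T5}(i) yields $\lambda_{\log}(f)=\sigma_{\log}(f)=2$ directly, with no transfer step and no appeal to those invariance results. A further small point in your favor: your conjugation identity $(D_{q}^{k}f)(q^{-k}z)=q^{k(k-1)/2}D_{q^{-1}}^{k}f(z)$, proved by induction from the dilation rule implicit in Lemma \ref{L4.1}(iii), carries the correct constant, whereas the paper's corresponding identity $D_{q}f(z)=q\,D_{q^{-1}}f(qz)$ has a spurious factor under either reading of the notation (one computes $(D_{q^{-1}}f)(qz)=D_{q}f(z)$, while $D_{q^{-1}}[f(q\cdot)](z)=q\,D_{q}f(z)$); this is harmless in both proofs, since Theorem \ref{T5}(i) is insensitive to multiplying the polynomial coefficient by a nonzero constant.
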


\begin{proof}Since $0<|q|<1,$ we get $|q^{-1}|>1.$ Note that $D_{q}f(z)=q\,D_{q^{-1}}f(qz)$ and thus $D_{q}^{k}f(z)=q^kD_{q^{-1}}^{k}f(q^{k}z).$ We can rewrite \eqref{E4.4} as
\begin{eqnarray*}D^{k}_{q^{-1}}f(q^{k}z)+q^{-k}A(q^{-k}q^{k}z)f(q^{k}z)=0, \,\,\,\, (k\in\mathbb{N},\,\, 0<|q|<1),\end{eqnarray*}
and thus,
\begin{eqnarray*}D^{k}_{q^{-1}}f(q^{k}z)+q^{-k}A(q^{-k}q^{k}z)f(q^{k}z)=0, \,\,\,\, (k\in\mathbb{N},\,\, |q^{-1}|>1).\end{eqnarray*}
Then it follows from Theorem \ref{T5}(i) that $f(q^{k}z)$ must be transcendental and satisfy $\lambda_{\log}(f(q^{k}z))=\sigma_{\log}(f(q^{k}z))=2.$ The conclusion comes from the fact that $$N(r,\frac{1}{h(qz)})=(1+o(1))N(r, \frac{1}{h}),\quad T(r, h(qz))=(1+o(1))T(r, h)$$ for any meromorphic function $h$ with zero order (refer to \cite{Barnett, chen-book, zhang-korhonen}).
\end{proof}

Noting that Wiman-Valiron theory is an important tool in the study of entire functions, we recall some definitions and basic results from Wiman-Valiron theory(see \cite{laine,chern-kim}) before proving Theorem \ref{T5}. Let $g(z)$ be a transcendental entire function with Taylor expansion $g(z)=\sum_{n=0}^{\infty}a_{n}z^{n}.$ The maximum term $\mu(r, g)$ and the central index $\nu(r, g)$ of $g$ are defined, respectively, by
$$\mu(r, g)=\max_{n\geq 0}\{|a_{n}|r^{n}\}\quad \text{and}\quad \nu(r, g)=\max\{m: |a_{m}|r^{m}=\mu(r, g)\}.$$
The order and logarithmic order of $g$ can be defined equivalently by $$\sigma(g)=\limsup_{r\to\infty}\frac{\log^{+}\nu(r, g)}{\log r},\quad \sigma_{\log}(g)=\limsup_{r\to\infty}\frac{\log^{+}\nu(r, g)}{\log\log r}+1.$$
\par
By Wen and Ye's Wiman-Valiron theorem for $q$-difference \cite{wen-ye}, we obtain a Wiman-Valiron theorem for Jackson difference.
\begin{lemma}\label{L4.4}
Suppose that $k$ is a positive integer, $q$ is a complex number with $q^{k}\in\mathbb{C}\setminus\{0, 1\}$. Let $f$ be a transcendental entire function of order strictly less than $1/2$ and $F\subset \mathbb{R}^{+}$ a set of finite logarithmic measure. Then for any $0<\delta<1/4$ and any $z$ with $|z|=r\not\in F$ satisfying $$|f(z)|>M(r, f)\nu(r, f)^{\delta-\frac{1}{4}},$$ we have
\begin{eqnarray*}
\frac{D^{k}_{q}f(z)}{f(z)}=
(q-1)^{-k}z^{-k}q^{-\frac{k(k-1)}{2}}\sum_{j=0}^{k}(-1)^{j}\left[\begin{array}{c}
                                                                                            k \\
                                                                                            j
                                                                                          \end{array}
\right]_{q}q^{\frac{j(j-1)}{2}}e^{(q^{k-j}-1)\nu(r, f)(1+o(1))}.\end{eqnarray*}Particularly, $k=1,$  \begin{eqnarray*}
\frac{D_{q}f(z)}{f(z)}=\frac{1}{(q-1)z}\left(e^{(q-1)\nu(r, f)(1+o(1))}-1\right).
\end{eqnarray*}
\end{lemma}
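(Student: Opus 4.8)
The plan is to reduce the statement to the $q$-difference Wiman--Valiron estimate of Wen and Ye \cite{wen-ye}, applied separately to each dilated value $f(q^{k-j}z)$. First I would recall the explicit expansion of the $k$-th Jackson difference already invoked in the proof of Theorem \ref{T1}, namely
\begin{equation*}
D_{q}^{k}f(z)=(q-1)^{-k}z^{-k}q^{-k(k-1)/2}\sum_{j=0}^{k}(-1)^{j}\left[\begin{array}{c} k \\ j \end{array}\right]_{q}q^{j(j-1)/2}f(q^{k-j}z),
\end{equation*}
taken from \cite[page 13]{bangerezako}. Dividing both sides by $f(z)$ casts the left-hand side as precisely the prescribed finite linear combination of the ratios $f(q^{k-j}z)/f(z)$, so the whole problem collapses to producing, for each fixed $j$, an asymptotic formula for one such ratio valid on the relevant near-maximum-modulus set.

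The key step is the application of Wen--Ye's Wiman--Valiron theorem for $q$-differences \cite{wen-ye}. Since $f$ is transcendental entire of order strictly less than $1/2$ and each dilation factor $q^{k-j}$ with $0\le j\le k-1$ is nonzero and (in the regime $|q|\neq 1$ in which the lemma is applied) of modulus different from $1$, their theorem supplies a set $F_{j}\subset\mathbb{R}^{+}$ of finite logarithmic measure such that, for every $z$ with $|z|=r\notin F_{j}$ satisfying $|f(z)|>M(r,f)\nu(r,f)^{\delta-1/4}$, one has
\begin{equation*}
\frac{f(q^{k-j}z)}{f(z)}=e^{(q^{k-j}-1)\nu(r,f)(1+o(1))}.
\end{equation*}
For $j=k$ this degenerates to $1=e^{0}$, so only the finitely many indices $j=0,\dots,k-1$ carry content. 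I would then set $F=\bigcup_{j=0}^{k-1}F_{j}$, which still has finite logarithmic measure, and restrict to $|z|=r\notin F$ so that all $k$ estimates hold simultaneously under the single hypothesis $|f(z)|>M(r,f)\nu(r,f)^{\delta-1/4}$ stated in the lemma.

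Substituting these ratios back into the divided expansion yields exactly the claimed formula for $D_{q}^{k}f(z)/f(z)$, and specialising $k=1$ (where the two surviving terms are $e^{(q-1)\nu(r,f)(1+o(1))}$ and $1$) recovers the displayed particular case. The main obstacle is the second step: I must be sure that Wen--Ye's theorem is invoked under the correct normalisation, i.e. that the single condition $|f(z)|>M(r,f)\nu(r,f)^{\delta-1/4}$, together with the order bound $\sigma(f)<1/2$, is exactly what their result requires in order to force $f(q^{k-j}z)\approx f(z)\,e^{(q^{k-j}-1)\nu(r,f)}$ for \emph{all} the relevant dilations at once, and that the individual $o(1)$ error terms appearing in the exponents may be absorbed uniformly over the finite index set $j=0,\dots,k-1$ after passing to the union $F$. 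Once this uniformity is secured, the rest is a direct substitution requiring no further analytic input.
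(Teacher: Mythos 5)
Your proposal follows essentially the same route as the paper's own proof: the paper likewise quotes Wen--Ye's Wiman--Valiron estimate $f(q^{k-j}z)/f(z)=e^{(q^{k-j}-1)\nu(r,f)(1+o(1))}$ from \cite[Theorem 2.3]{wen-ye} and substitutes it into Hahn's expansion $D^{k}_{q}f(z)=(q-1)^{-k}z^{-k}q^{-k(k-1)/2}\sum_{j}(-1)^{j}\left[\begin{smallmatrix}k\\ j\end{smallmatrix}\right]_{q}q^{j(j-1)/2}f(q^{k-j}z)$, treating $k=1$ first and the general case by the same substitution. Your explicit handling of the union of exceptional sets $F=\bigcup_{j}F_{j}$ is a detail the paper leaves implicit, but it introduces no new idea.
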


\begin{proof}\cite[Theorem 2.3]{wen-ye} says that for any $0<\delta<\frac{1}{4}$ and any $z$ with $|z|=r\not\in F$ satisfying $|f(z)|>M(r, f)\nu(r, f)^{\delta-\frac{1}{4}}$, we have $$\frac{f(q^{k}z)}{f(z)}=e^{(q^{k}-1)\nu(r, f)(1+o(1))}.$$
Hence,  for $k=1,$  we get immediately that \begin{equation}\label{wV-q-difference}
\frac{D_{q}f(z)}{f(z)}=\frac{1}{(q-1)z}\left(\frac{f(qz)}{f(z)}-1\right)=\frac{1}{(q-1)z}\left(e^{(q-1)\nu(r, f)(1+o(1))}-1\right).
\end{equation}
For general $k\geq 1,$  recall the equlity\cite{hahn} (see also \cite[page 13]{bangerezako} and \cite[Lemma 2.2]{annaby-mansour}) \begin{equation}\label{exact-form-k-Jackson}
D^{k}_{q}f(z)=(q-1)^{-k}z^{-k}q^{-\frac{k(k-1)}{2}}\sum_{j=0}^{k}(-1)^{j}\left[\begin{array}{c}
                                                                                            k \\
                                                                                            j
                                                                                          \end{array}
\right]_{q}q^{\frac{j(j-1)}{2}}f(q^{k-j}z),
\end{equation}
then combining this with \eqref{wV-q-difference} yields
\begin{equation*}
\begin{split}
\frac{D^{k}_{q}f(z)}{f(z)}=&(q-1)^{-k}z^{-k}q^{-\frac{k(k-1)}{2}}\sum_{j=0}^{k}(-1)^{j}\left[\begin{array}{c}
                                                                                            k \\
                                                                                            j
                                                                                          \end{array}
\right]_{q}q^{\frac{j(j-1)}{2}}\frac{f(q^{k-j}z)}{f(z)}\\
=&(q-1)^{-k}z^{-k}q^{-\frac{k(k-1)}{2}}\sum_{j=0}^{k}(-1)^{j}\left[\begin{array}{c}
                                                                                            k \\
                                                                                            j
                                                                                          \end{array}
\right]_{q}q^{\frac{j(j-1)}{2}}e^{(q^{k-j}-1)\nu(r, f)(1+o(1))}.
\end{split}
\end{equation*}
\end{proof}

We next prove a lemma of logarithmic difference for meromorphic functions, in which the first equality with $k=1$ can be also seen in (\cite[Theorem 2.2]{wen}).\par

\begin{lemma}\label{L4.5} Let $f$ be a nonconstant meromorphic function with finite logarithmic order, $k\in\mathbb{N},$ and $q\in\mathbb{C}\setminus\{0\}.$ Then for any $\varepsilon$, we have
\begin{equation}
\begin{split}
m\left(r, \frac{f(q^{k}z)}{f(z)}\right)&=O\left((\log r)^{\sigma_{\log}(f)-1+\varepsilon}\right),\\
m\left(r, \frac{D_{q}^{k}f(z)}{f(z)}\right)&=O\left((\log r)^{\sigma_{\log}(f)-1+\varepsilon}\right).
\end{split}
\end{equation}
\end{lemma}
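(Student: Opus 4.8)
The plan is to bootstrap both estimates from the case $k=1$ of the first identity, namely $m\left(r,\frac{f(qz)}{f(z)}\right)=O\left((\log r)^{\sigma_{\log}(f)-1+\varepsilon}\right)$, which is recorded in \cite[Theorem 2.2]{wen}. To handle the first displayed estimate for general $k$, I would write the quotient as a telescoping product
$$\frac{f(q^{k}z)}{f(z)}=\prod_{n=0}^{k-1}\frac{f(q^{n+1}z)}{f(q^{n}z)},$$
and use the subadditivity of the proximity function to get
$$m\left(r,\frac{f(q^{k}z)}{f(z)}\right)\leq\sum_{n=0}^{k-1}m\left(r,\frac{f(q^{n+1}z)}{f(q^{n}z)}\right).$$
Each summand is the $k=1$ estimate applied to the function $g_{n}(z):=f(q^{n}z)$, since $f(q^{n+1}z)=g_{n}(qz)$; hence $m\left(r,\frac{g_{n}(qz)}{g_{n}(z)}\right)=O\left((\log r)^{\sigma_{\log}(g_{n})-1+\varepsilon}\right)$. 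It then only remains to observe that dilating the argument by the nonzero constant $q^{n}$ leaves the logarithmic order invariant, i.e. $\sigma_{\log}(g_{n})=\sigma_{\log}(f)$. This follows from $m(r,f(cz))=m(|c|r,f)$ and the corresponding comparison of counting functions, which give $T(r,f(cz))=T(|c|r,f)+O(\log r)$, together with $\log\log(|c|r)=\log\log r+o(1)$. Summing the $k$ bounds and relabeling $\varepsilon$ yields the first estimate.

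For the second displayed estimate I would invoke the exact expansion \eqref{exact-form-k-Jackson},
$$\frac{D_{q}^{k}f(z)}{f(z)}=(q-1)^{-k}z^{-k}q^{-\frac{k(k-1)}{2}}\sum_{j=0}^{k}(-1)^{j}\left[\begin{array}{c}k\\ j\end{array}\right]_{q}q^{\frac{j(j-1)}{2}}\frac{f(q^{k-j}z)}{f(z)},$$
and apply subadditivity once more to obtain
$$m\left(r,\frac{D_{q}^{k}f(z)}{f(z)}\right)\leq m(r,z^{-k})+\sum_{j=0}^{k}m\left(r,\frac{f(q^{k-j}z)}{f(z)}\right)+O(1),$$
where the $O(1)$ absorbs the constant coefficients $(q-1)^{-k}$, $q^{-k(k-1)/2}$, $q^{j(j-1)/2}$ and the $q$-binomials. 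For $0\leq j<k$ each term $m\left(r,\frac{f(q^{k-j}z)}{f(z)}\right)$ is $O\left((\log r)^{\sigma_{\log}(f)-1+\varepsilon}\right)$ by the first part; the term $j=k$ contributes $m(r,1)=0$; and $m(r,z^{-k})=k\log^{+}(1/r)=0$ for $r\geq 1$. Collecting these finitely many bounds of the same order, and again relabeling $\varepsilon$, gives the claimed estimate.

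The one genuinely non-routine ingredient is the $k=1$ logarithmic difference lemma for functions of finite logarithmic order, the analytic heart of the argument, for which I rely on \cite[Theorem 2.2]{wen}. Beyond bookkeeping, the only point requiring care is the dilation invariance $\sigma_{\log}(f(cz))=\sigma_{\log}(f)$, since the iteration replaces $f$ by $f(q^{n}z)$ at each stage and the cited lemma must be applied to these dilates; once this is established, the additive constant losses and the finitely many error terms of equal order cause no difficulty.
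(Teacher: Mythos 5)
Your proposal is correct, and its bookkeeping skeleton — the telescoping product for $f(q^{k}z)/f(z)$, the dilation invariance $\sigma_{\log}(f(cz))=\sigma_{\log}(f)$, and the reduction of the Jackson quotient via the exact expansion \eqref{exact-form-k-Jackson} with subadditivity of $m$ — is exactly what the paper does. The genuine difference lies in the analytic core: you treat the $k=1$ estimate $m\bigl(r,\tfrac{f(qz)}{f(z)}\bigr)=O\bigl((\log r)^{\sigma_{\log}(f)-1+\varepsilon}\bigr)$ as a black box from \cite[Theorem 2.2]{wen}, whereas the paper re-proves it from scratch: starting from the explicit upper bound of \cite[Lemma 5.1]{Barnett}, controlling the unintegrated counting functions via $n(\rho,f)\log\rho\leq N(\rho^{2},f)$, and then making the Chiang--Feng choice $\rho=r\log r$ before invoking finite logarithmic order. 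Both routes are legitimate — the paper itself remarks that its $k=1$ conclusion ``is also obtained by Wen for $\rho=r^{2}$'' — so your citation closes the gap honestly. What the paper's longer route buys is a self-contained derivation (modulo Barnett's lemma) that exhibits the flexibility in the choice of the comparison radius $\rho$; what your route buys is brevity, at the cost of making the lemma's hardest ingredient external. Your treatment of the dilation invariance is, if anything, slightly more careful than the paper's one-line claim $T(\tfrac{r}{q},f(qz))=T(r,f)$: your estimate $T(r,f(cz))=T(|c|r,f)+O(\log r)$ suffices because any nonconstant meromorphic function has $T(r,f)\geq(1+o(1))\log r$, so the error term cannot alter the logarithmic order.
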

\begin{proof} Without loss of generality, we assume that $f(0)\neq 0, \infty.$ Or else, we always find a suitable $m\in\mathbb{Z}$ such that $g(z)=z^m f(z)$ satisfying $f(0)\not=0,\infty$. It follows from \cite[Lemma 5.1]{Barnett} that
\begin{eqnarray*}
&&
m\left(r, \frac{f(qz)}{f(z)}\right)\\&\leq&\left(n(\rho, f)+n(\rho,  \frac{1}{f})\right)\left(\frac{|q-1|^{\delta}(|q|^{\delta}+1)}{\delta(\delta-1)|q|^{\delta}}+\frac{|q-1|r}{\rho-|q|r}+\frac{|q-1|r}{\rho-r}\right)\\
&&+\frac{4|q-1|r\rho}{(\rho-r)(\rho-|q|r)}\left(T(\rho, f)+\log^{+}\left|\frac{1}{f(0)}\right|\right),
\end{eqnarray*} where $|z|=r,$ $\rho\leq\max\{r, |q|r\}$ and $0<\delta<1.$ We observe that the counting function of poles satisfies
\begin{equation*}
\begin{split}
N(\rho^{2}, f)&\geq\int_{\rho}^{\rho^{2}}\frac{n(t, f)-n(0, f)}{t}dt+n(0, f)\log \rho^{2}\geq n(\rho, f)\log \rho.
\end{split}
\end{equation*}We adopt the idea of Chiang-Feng \cite{Wilson2, Feng} to take $\rho=r\log r,$ and then get that
\begin{equation}\label{E4.3}
\begin{split}&
m\left(r, \frac{f(qz)}{f(z)}\right)\\\leq  &O(1)\left(N(\rho^{2}, f)+N(\rho^{2}, \frac{1}{f})\right)\left(1+\frac{1}{\log r-|q|}+\frac{1}{\log r-1}\right)\frac{1}{\log r}\\
&+\frac{4|q-1|\log r}{(\log r-1)(\log r-|q|)}\left(T(\rho, f)+O(1)\right).
\end{split}
\end{equation}
Since $\sigma_{\log}(f)<\infty$, then for any $\varepsilon>0,$ we have
\begin{equation*}
\begin{split}
\max\{T(\rho, f),\,N(\rho^2,f),\,N(\rho^2,1/f)\}\leq &(\log (r\log r)^2)^{\sigma_{\log}(f)+\frac{\varepsilon}{2}}\\=&O\left((\log r)^{\sigma_{\log}(f)+\varepsilon}\right).\end{split}
\end{equation*}Submitting these inequalities into \eqref{E4.3} yields
\begin{eqnarray*}
m\left(r, \frac{f(qz)}{f(z)}\right)=O\left((\log r)^{^{\sigma_{\log}(f)-1+\varepsilon}}\right),
\end{eqnarray*}
which is also obtained by Wen \cite[Theorem 2.2]{wen} for $\rho=r^2$. For each meromorphic function $f$, we have $T(\frac{r}{q},f(qz))=T(r,f)$. This implies $\sigma_{\log}(f(qz))=\sigma_{\log}(f)<\infty.$ It is not difficult to see that \begin{equation*}
m\left(r,\frac{f(q^{k}z)}{f(z)}\right)\leq \sum_{j=0}^{k-1} m\left(r, \frac{f(q^{j+1}z)}{f(q^{j}z)}\right)=O\left((\log r)^{^{\sigma_{\log}(f)-1+\varepsilon}}\right).
\end{equation*}
From this and \eqref{exact-form-k-Jackson}, it follows that
\begin{eqnarray*}m\left(r, \frac{D^{(k)}_{q}f(z)}{f(z)}\right)\leq \sum_{j=0}^{k}m\left(r, \frac{f({q}^{k-j}z)}{f(z)}\right)+O(1)=O\left((\log r)^{^{\sigma_{\log}(f)-1+\varepsilon}}\right).
\end{eqnarray*}
\end{proof}

\begin{proof}[Proof of Theorem \ref{T5}]
(i). Since $A$ is a nonzero polynomial of degree $n$, we write it as $A(z)=a_nz^n+\cdots$ where $n$ is its degree. Bergweiler, Ishizaki and Yanagihara \cite{b-i-y} proved that all meromorphic solutions of the general $q$-difference equations \begin{eqnarray}\label{E5.4}\sum_{j=0}^{m}b_{j}(z)f(c^{j}z)=Q(z) \quad (0<|c|<1)\end{eqnarray}  satisfy $T(r, f)=O((\log r)^{2}),$ where the coefficients $b_{0}(\not\equiv 0), \ldots, b_{m}(\equiv 1)$ and $Q$ are rational functions. Thus by this result, we know that any nonzero entire solution $f$ of \eqref{E4.1} satisfies $\sigma(f)=0,\,\sigma_{\log}(f)\leq 2$. Since
the $k$-th Jackson difference operator sends a polynomial of degree $m$ to a polynomial of degree $\max\{m-k,0\}$, then every entire solution $f$ of \eqref{E4.1} must be transcendental. Rewrite \eqref{E4.1} as $-A(z)=\frac{D_{q}^{k}f(z)}{f(z)}$. By Lemma \ref{L4.4}, for any $z$ with $|z|=r\not\in F$ satisfying $|f(z)|=M(r, f),$ we have
\begin{equation}\label{N}
\begin{split}
(q-1)^{-k}z^{-k}q^{-\frac{k(k-1)}{2}}&\sum_{j=0}^{k}(-1)^{j}
\left[\begin{array}{c}
                                                                                            k \\
                                                                                            j
                                                                                          \end{array}
\right]_{q}q^{\frac{j(j-1)}{2}}e^{(q^{k-j}-1)\nu(r, f)(1+o(1))}\\
&=-a_{n}z^{n}(1+o(1)).
\end{split}
\end{equation}
where $F$ is a set of finite logarithmic measure. This implies that $|q|>1$ since $\nu(r,f)\to\infty$ as $r\to\infty$. Further, set  $b=(q-1)^{-k}q^{-\frac{k(k+1)}{2}}\left[\begin{array}{c}
k\\
0
\end{array}\right]_q$, then
\begin{eqnarray*}b(1+o(1))e^{(q^k-1)\nu(r, f)})=|a_n|r^{n+k}(1+o(1)),\end{eqnarray*}
so $\log \nu(r, f)=\log\log r+O(1)$ holds for any $r\not\in F$. Hence
$$\sigma_{\log}(f)-1=\limsup_{r\to\infty}\frac{\log^{+} \nu(r, f)}{\log \log r}=1.$$  Since the logarithmic order of $f$ is finite, we have $\lambda_{\log}(f)=\sigma_{\log}(f)=2.$\vskip 2mm
\par (ii). For nonzero rational $A=\frac{P_1}{P_2},$ we write it as $A(z)=c\,z^{\deg(P_1)-\deg(P_2)}$ for large $z$. Also from the result of Bergweiler, Ishizaki and Yanagihara \cite{b-i-y}, any nonzero entire solution $f$ of \eqref{E4.1} has $\sigma(f)=0,\sigma_{\log}(f)\leq 2$. If $f$ is a nonzero polynomial, then by the basic property of $D_q^k$ on polynomials, we know $\deg(P_{2})-\deg(P_{1})=k.$ Now we treat the case that $f$ is a transcendental entire solution. Again by Lemma \ref{L4.4}, similarly for any $z$ with $|z|=r\not\in F$ satisfying $|f(z)|=M(r, f)$, we have
\begin{equation*}
\begin{split}
-(q-1)^{-k}z^{-k}q^{-\frac{k(k-1)}{2}}&\sum_{j=0}^{k}(-1)^{j}
\left[\begin{array}{c}
                                                                                            k \\
                                                                                            j
                                                                                          \end{array}
\right]_{q}q^{\frac{j(j-1)}{2}}e^{(q^{k-j}-1)\nu(r, f)(1+o(1))}
=\frac{P_1(z)}{P_2(z)}.
\end{split}
\end{equation*}
From this equality, it is follows that for $0<|q|<1$, we have
\begin{eqnarray*}b^{-1}(1+o(1))e^{(1-q^k)\nu(r,f)}=c^{-1}z^{\deg(P_{2})-k-\deg(P_{1})},\end{eqnarray*}
and for $|q|>1$, we have
\begin{eqnarray*}b(1+o(1))(e^{(q^k-1)\nu(r, f)})=c\,z^{\deg(P_{1})+k-\deg(P_{2})}.\end{eqnarray*}
This means that in the above two cases on $|q|$, $\log \nu(r, f)=\log\log r+O(1),\quad r\not\in F$.
Then by the equivalent definition of $\sigma_{\log}(f)$, $\sigma_{\log}(f)=2$ holds.\vskip 2mm
\par

(iii). Assume that $A$ is a transcendental meromorphic, and $\delta(\infty, A)>0$. It means that for enough large $r$,
$$N(r, A)<(1-\frac{\delta(\infty, A)}{2})T(r, A),\,\, \text{that is}\,\, \frac{\delta(\infty,A)}{2}T(r,A)\leq m(r,A).$$
In this case, the solution $f$ of \eqref{E4.1} must be transcendental. If $f$ is of finite logarithmic order, then by Lemma \ref{L4.5}), for any $\varepsilon>0$, we have
\begin{eqnarray*}
\frac{\delta(\infty,A)}{2}T(r, A)\leq m(r,A)\leq m\Big(r, \frac{D_{q}^{k}f(z)}{f(z)}\Big)=O\left((\log r)^{\sigma_{\log}(f)-1+\varepsilon}\right).
\end{eqnarray*}
This implies $\sigma_{\log}(f)\geq \sigma_{\log}(A)+1$. It is trivial that $\sigma_{\log}(f)\leq\infty.$ Therefore, $\sigma_{\log}(A)+1\leq\sigma_{\log}(f)\leq\infty$. In addition, it is clear that the derivation of this inequality on growth also holds for meromorphic solutions.
\end{proof}
\par Modifying the proof of Theorem \ref{T5}(i) a little, we can also discuss non-homogeneous linear Jackson $q$-difference equations with polynomial coefficients.
\begin{theorem}
Let $f$ be a transcendental entire solution of the linear $q$-difference equation $D_q^kf(z)+A(f)f(z)=B(z)$, where $A,B$ are non-zero polynomials. Then $|q|>1$, and $\lambda_{\log}(f)=\sigma_{\log}(f)=2.$
\end{theorem}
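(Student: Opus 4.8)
The plan is to treat this as the non-homogeneous counterpart of Theorem~\ref{T5}(i) (reading $A(f)f(z)$ as $A(z)f(z)$, so that the equation is $D_q^kf+Af=B$) and to reproduce its two-phase structure: first extract an a priori bound $\sigma_{\log}(f)\le 2$ from the Bergweiler--Ishizaki--Yanagihara growth estimate, and then sharpen it with the Jackson Wiman--Valiron asymptotics of Lemma~\ref{L4.4}. The only genuinely new point is to check that the polynomial right-hand side $B$ does not disturb the maximum-modulus analysis.

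First I would expand $D_q^kf$ by the exact formula \eqref{exact-form-k-Jackson} and rewrite the equation in the form $\sum_{j=0}^{k}b_j(z)f(q^{k-j}z)=B(z)$, where each $b_j$ is a rational function (the $j=k$ coefficient absorbs $A$) and the right-hand side $B$ is a polynomial, hence rational. After normalising the highest-shift coefficient to $1$ and rescaling the variable exactly as in the homogeneous case, this is an equation of the type \eqref{E5.4} treated in \cite{b-i-y}, so every meromorphic solution satisfies $T(r,f)=O((\log r)^2)$. Consequently $\sigma(f)=0$ and $\sigma_{\log}(f)\le 2$; in particular $f$ has order strictly less than $1/2$, which is precisely the hypothesis needed to invoke Lemma~\ref{L4.4}.

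Next I would divide the equation by $f$ to obtain $\dfrac{D_q^kf(z)}{f(z)}+A(z)=\dfrac{B(z)}{f(z)}$ and evaluate it along the radii $r\notin F$ at points where $|f(z)|=M(r,f)$, as furnished by Lemma~\ref{L4.4}. The key observation is that, $f$ being transcendental entire, $M(r,f)$ eventually dominates every power of $r$, whereas $B$ is a polynomial; hence $B(z)/f(z)\to 0$ at these points. Therefore $D_q^kf/f=-A(z)+o(1)=-a_nz^n(1+o(1))$, with $a_nz^n$ the leading term of $A$, and substituting the Wiman--Valiron expression from Lemma~\ref{L4.4} recovers verbatim the relation \eqref{N} of the homogeneous case.

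From this relation the remaining analysis is identical to the proof of Theorem~\ref{T5}(i): because $\nu(r,f)\to\infty$, balancing the exponential left-hand side against the polynomial right-hand side forces $|q|>1$; isolating the dominant term $e^{(q^k-1)\nu(r,f)}$ then gives $\log\nu(r,f)=\log\log r+O(1)$ off $F$, whence $\sigma_{\log}(f)=2$, and since the logarithmic order is finite and $f$ is transcendental entire, $\lambda_{\log}(f)=\sigma_{\log}(f)=2$. I expect the only real obstacle to be the reduction step, namely confirming that $B(z)/f(z)$ is negligible at the maximum-modulus points so that the non-homogeneous equation collapses onto the homogeneous asymptotics; once that is secured, the rest is a transcription of the earlier argument.
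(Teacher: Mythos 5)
Your proposal is correct and follows essentially the same route as the paper: both use the Bergweiler--Ishizaki--Yanagihara bound to get $\sigma_{\log}(f)\le 2$, then apply Lemma \ref{L4.4} to $-A(z)=\frac{D_q^kf(z)}{f(z)}+\frac{B(z)}{f(z)}$ at maximum-modulus points, where the transcendence of $f$ makes $B(z)/f(z)$ negligible, recovering the relation \eqref{N} and hence $|q|>1$ and $\lambda_{\log}(f)=\sigma_{\log}(f)=2$. The step you flagged as the ``only real obstacle'' (negligibility of $B/f$) is exactly the one observation the paper adds to the homogeneous argument.
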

\begin{proof}
Similarly as in the proof of Theorem \ref{T5}(i), from Bergweiler, Ishizaki and Yanagihara \cite{b-i-y} on \eqref{E5.4}, we have
 $\sigma_{\log}(f)\leq 2$, and further $\lambda_{\log}(f)=\sigma_{\log}(f)$.\vskip 2mm
 \par Since $f$ is transcendental, clearly $|\frac{M(r,B)}{M(r,f)}|\to 0$ as $r\to\infty$. Applying Lemma \ref{L4.4} to $-A(z)=\frac{D_{q}^{k}f(z)}{f(z)}+\frac{B(z)}{f(z)}$, it yields that for any $z$ satisfying $|f(z)|=M(r, f)$ with $|z|=r$ outside a set $F$ of finite logarithmic measure, we can obtain an equality similar to \eqref{N}. This implies that $|q|>1$ and $\lambda_{\log}(f)=\sigma_{\log}(f)=2.$\end{proof}
At the end, we give some examples to investigate the growth of some $q$-special functions by Theorem \ref{T5}.\par

\begin{example}\label{Ex2} Set \cite[Page 19]{bangerezako} $$\tilde{e}_{q}(z):=\tilde{e}_{q}^{z}=_1\phi_{0}(0; -; q, z)=\sum_{n=0}^{+\infty}\frac{z^{n}}{(q; q)_{n}},$$  then $e_{q}^{z}=\tilde{e}_{q}((1-q)z)$ and $\tilde{e}_{q}(z)\tilde{e}_{q^{-1}}(q^{-1}z)=1,$ see \cite[Proposition 5.2]{ramis}. For $|q|>1,$ $$\tilde{e}_{q}(z)=\prod_{n=1}^{+\infty} (1-q^{-n}z)=(q^{-1}z; q^{-1})_{\infty}$$
 is an entire function, and from \cite[Proposition 5.2]{ramis}, $\tilde{e}_{q}(z)$ satisfies the linear first order Jackson $q$-difference equation $$D_{q}f(z)+\frac{1}{q-1}f(z)=0, \quad (|q|>1).$$ By Theorem \ref{T5} (i), we get that $\tilde{e}_{q}(z)$ $(|q|>1)$ is of logarithmic order two.
\end{example}

\begin{example} \label{Ex3} Suppose that the function $\exp_{q}(\lambda z)$ is a solution of the equation
\begin{equation}\label{E5.3}D^{2}_{q}f(z)+f(z)=0.\end{equation} Then see \cite[Page 45]{bangerezako}, $\lambda^{2}+1=0$ is said to be the characteristic equation of \eqref{E5.3}, and $f_{1}(z)=\exp_{q}(iz)$ and $f_{1}(z)=\exp_{q}(-iz)$ are two independent solutions of
\eqref{E5.3}. Let $|q|>1.$ Then from Theorem \ref{T5}(i), we get that the two solutions are of logarithmic order two.
\end{example}

\begin{example} \label{Ex3'} From Example \ref{Ex-1} or Example \ref{Ex3}, we get that whenever $|q|>1,$ the function $\sin_{q}z$ and $\cos_{q}z$ are also two independent solutions of the equation
\begin{equation} D^{2}_{q}f(z)+f(z)=0.\end{equation} Then from Theorem \ref{T5}(i), $\sin_q(z)$ and $\cos_q(z)$ are of logarithmic order two.
\end{example}

\begin{example}\label{Ex4} Denote by \cite[Page 19]{bangerezako} $$E_{q}^{z}=_0\phi_{0}(-; -; q,-z)=\sum_{n=0}^{+\infty}\frac{q^{\frac{n(n-1)}{2}}z^{n}}{(q; q)_{n}}.$$ We have $\tilde{e}_{q}^{z}E_{q}^{-z}=1$ and $\tilde{e}_{q^{-1}}(q^{-1}z)=E_{q}^{-z},$ see \cite[Page 74]{ramis} or \cite[Corollary 2.1.2]{bangerezako}. From \cite[Proposition 5.1]{ramis}, we know that if $|q|<1,$ the entire function $E_{q}^{z}=\prod _{n=0}^{+\infty}(1+q^{n}z)=(-z; q)_{\infty}$ satisfies the first order Jackson $q$-difference equation $$D_{q}f(z)+\frac{1}{(q-1)(z+1)}f(z)=0, \quad (0<|q|<1).$$ By Theorem \ref{T5}(ii), we get that $E_{q}^{z}$ $(0<|q|<1)$ is of logarithmic order two.
\end{example}

\begin{example} \label{Ex5}Let $|q|\neq 0, 1.$ We observe that the polynomial $P(z)=z^{5}+1$ satisfies a first order Jackson $q$-difference equation
$$D_{q}f(z)-\frac{(q^{5}-1)z^{4}}{(q-1)(z^{5}+1)}f(z)=0$$
and the second order Jackson $q$-difference equation
$$D_{q}^{2}f(z)-\frac{(q^{9}-q^{5}-q^{4}+1)z^{3}}{(q-1)^{2}(z^{5}+1)}f(z)=0.$$
We rewrite $$-\frac{(q^{5}-1)z^{4}}{(q-1)(z^{5}+1)}=\frac{P_{1}}{P_{2}},\quad -\frac{(q^{9}-q^{5}-q^{4}+1)z^{3}}{(q-1)^{2}(z^{5}+1)}=\frac{Q_{1}}{Q_{2}},$$
clearly $\deg(P_{1})=4=\deg(P_{2})-1$ and $\deg(Q_{1})=2=\deg(Q_{2})-2.$ This shows that for a polynomial solution, the conclusion in Theorem \ref{T5} (ii) is sharp.
\end{example}

\begin{example}\label{Ex6} From \cite[Page 16]{bangerezako}, we see that every entire solution of Jackson $q$-difference equation
\begin{equation}\label{E5.5} D_{q}f(z)=P(z)f(qz),\,\,\,(0<|q|<1)\end{equation} with polynomial coefficient $P(z)$ reads $$f(z)=f(0)\prod_{j=0}^{\infty}\left(1+(1-q)q^{j}zP(q^{j}z)\right).$$
From Corollary \ref{C1}, it follows that $\sigma_{\log}(f)=2.$ Especially, when $P(z)$ is a nonzero constant $a,$ then it is known \cite[Page 18]{bangerezako} that the \eqref{E5.5} has a solution of the form $$f(z)=f(0)\exp_{q^{-1}}(az)=f(0)\sum_{n=0}^{\infty}\frac{a^{n}z^{n}}{[n]_{q^{-1}}!},$$ where $[n]_{q^{-1}}!$ is obtained from $[n]_{q}!$ by replacing $q$ by $q^{-1}.$
\end{example}

Finally, we propose two interesting problems deserved to be further studied.\par

\begin{question} Set $g(z)=\frac{1}{f(z)}$, where $f(z)$ appears in the above examples, then $g(z)$ is meromorphic in the plane and may satisfy some Jackson $q$-difference equation with rational coefficients. How about the growth and the distribution of zeros and poles of meromorphic solutions of Jackson $q$-difference equations such as
$$D^{k}_{q}f(z)+A(z)f(z)=B(z),$$
where $A(z),B(z)$ are rational.
\end{question}
\begin{question}In Theorem \ref{T5} (iii), when $A$ in \eqref{E4.1} is transcendental, the low estimate of the growth of meromorphic solutions is studied (implied in its proof). What is the upper estimate of the growth of these solutions? Is there an entire or meromorphic solution with infinite logarithmic order for \eqref{E4.1}? Further, for Jackson $q$-difference equations such as
$$D^{k}_{q}f(z)+A_{k-1}(z)D^k_qf(z)+\cdots+A_0(z)f(z)=B(z),$$
where $A_{k-1},\cdots,A_1,A_0$ and $B$ are meromorphic in the plane, what can we say about the meromorphic solutions?
\end{question}

\end{document}